\DeclareMathAlphabet{\pazocal}{OMS}{zplm}{m}{n}
\newcommand{\conv}{{conv}}
\newcommand{\R}{\mathbb{R}}
\newcommand{\bR}{\mathbb{R}}
\newcommand{\C}{\mathbb{C}}
\newcommand{\cnt}{\mathcal{C}}
\newcommand{\D}{\mathbb{D}}
\newcommand{\bH}{\mathbb{H}}
\newcommand{\Hq}{\mathbb{H}}
\newcommand{\bP}{\mathbb{P}}
\newcommand{\bS}{\mathbb{S}}
\newcommand{\M}{\mathcal{M}}
\newcommand{\vc}[1]{\boldsymbol{#1}}
\newcommand{\interior}[1]{{\kern0pt#1}^{\mathrm{o}}}
 \newtheorem{thm}{Theorem}[section]
 \newtheorem{cor}[thm]{Corollary}
 \newtheorem{lemma}[thm]{Lemma}
 \newtheorem{prop}[thm]{Proposition}
 \theoremstyle{definition}
 \newtheorem{defn}[thm]{Definition}
 \theoremstyle{remark}
 \newtheorem*{ex}{Example}
 \newcommand*{\myproofname}{\S\nopunct}
 \numberwithin{equation}{section}
\begin{document}
\setlength{\abovedisplayskip}{3pt}
\setlength{\belowdisplayskip}{3pt}

\thanks{The second author was partially supported by FCT
 through project UID/MAT/04459/2013 and the third author was partially supported by FCT through CMA-UBI, project PEst-OE/MAT/UI0212/2013.}

\title[The star-center of the quaternionic numerical range]{The star-center of the quaternionic numerical range}

\author[L. Carvalho]{Lu\'{\i}s Carvalho}
\address{Lu\'{\i}s Carvalho, ISCTE - Lisbon University Institute\\    Av. das For\c{c}as Armadas\\     1649-026, Lisbon\\   Portugal}
\email{luis.carvalho@iscte-iul.pt}
\author[Cristina Diogo]{Cristina Diogo}
\address{Cristina Diogo, ISCTE - Lisbon University Institute\\    Av. das For\c{c}as Armadas\\     1649-026, Lisbon\\   Portugal\\ and \\ Center for Mathematical Analysis, Geometry,
and Dynamical Systems\\ Mathematics Department,\\
Instituto Superior T\'ecnico, Universidade de Lisboa\\  Av. Rovisco Pais, 1049-001 Lisboa,  Portugal
}
\email{cristina.diogo@iscte-iul.pt}
\author[S. Mendes]{S\'{e}rgio Mendes}
\address{S\'{e}rgio Mendes, ISCTE - Lisbon University Institute\\    Av. das For\c{c}as Armadas\\     1649-026, Lisbon\\   Portugal\\ and Centro de Matem\'{a}tica e Aplica\c{c}\~{o}es \\ Universidade da Beira Interior \\ Rua Marqu\^{e}s d'\'{A}vila e Bolama \\ 6201-001, Covilh\~{a}}
\email{sergio.mendes@iscte-iul.pt}
\subjclass[2010]{15B33, 47A12}

\keywords{quaternions, numerical range, star-shapedness}
\date{\today}

\maketitle

\begin{abstract}
In this paper we prove that the quaternionic numerical range is always star-shaped and its star-center is given by the equivalence classes of the star-center of the bild. We determine the star-center of the bild, and consequently of the numerical range, by showing that the geometrical shape of the upper part of the center is defined by two lines, tangents to the lower bild.
\end{abstract}


\maketitle
\section{Introduction}

Let $\Hq$ denote the skew-field of Hamilton quaternions. Let $A$ be a $n\times n$ matrix with quaternionic entries. It is well known that the numerical range $W_{\Hq}(A)=W(A)$ is a connected but not necessarily convex subset of the quaternions. The group of unitary quaternions $\bS_{\Hq}$ acts on $\Hq$ by automorphisms. Since every class $[q]$, $q\in \Hq$, has a representative in $\C^+$ and each class of $q\in W(A)$ is contained in $W(A)$, it became clear from the early studies of the quaternionic numerical range that it is enough to study the bild of $A$, $B(A)=W(A)\cap \C$ or the upper-bild $B^+(A)=W(A)\cap \C^+$.
 The latter has the advantage of being always convex whereas $B(A)$ is convex if, and only if, $W(A)$ is convex, see \cite[page 53]{Zh} and theorem \ref{theo convex W and B}. The convexity of the numerical range, the bild and upper bild has been studied by several authors, see \cite{Ye1,Ye2,R,ST,STZ}.

In the complex setting the numerical range is convex thanks to the celebrated Toeplitz-Hausdorff Theorem \cite{GR}. Over the time, several generalizations of the numerical range have been proposed, namely the C-numerical range, the joint numerical range, among others, and in these cases convexity may fail. It then becomes natural to look for convexity-like geometric properties. For instance, the property of star-shapedness has been studied in \cite{CT,LLPS18,LLPS19,LNT,LP}. We recall that star-shapedness of a set $B$ only requires that there is an element $b_0\in B$ such that every segment  connecting $b_0$ and any other element of $B$ must be contained in $B$, see  definition \ref{def_ss and center}. Accordingly, we say that $b_0$ is in the star-center of $B$.

  For some generalizations of the numerical range, the  star-shapedness of the (complex) numerical range holds under certain conditions. In the article we tackle the question of the star-shapedness in the quaternionic setting. We prove that the quaternionic numerical range is always star-shaped. In addition, we characterize the shape of the star-center for quaternionic matrices.

The star-shapedness of the numerical range is a consequence of two simple facts (see theorem \ref{theo star shape}). Firstly, the convexity of the upper and lower bilds imply that the segments whose end is a real element of the bild is contained in the bild.  Therefore the bild is star-shaped and the reals therein are part of its center. And secondly, the equality, up to isomorphism, of all two dimensional real subalgebras of the quaternions that include the reals (as a real subspace), leads us to the conclusion that the reals in $W(A)$ are in fact part of the (star) center of the numerical range.

 As mentioned before, the general reason to focus on the bild is that the whole numerical range can be  reconstructed from it by using similarity classes. Our result is in line with the elements of the bild being the building blocks of the numerical range. In fact, we prove in theorem \ref{C(W)=[C(B)]} that the center of the numerical range is given by the similarity classes of the center of the bild. Therefore, we only need to know the center of the bild, and then to build the similarity classes to obtain the center of the numerical range. When the matrix is non hermitian the upper center (likewise for the lower center) is the region of the upper bild limited by two lines. These two lines are the tangents to the curve defining the boundary of the lower bild at the reals, see theorems \ref{does it_touch_your inner self?}, \ref{center bild} and corollary \ref{straightforward characterization}. As a consequence of these results we establish a new proof of the important theorem by Au-Yeung  \cite[theorem 3]{Ye1}, which establish a necessary and sufficient condition for convexity of the numerical range, see corollary \ref{Cor_AuYeung_result}. We conclude with an example where we explicitly compute the center.

\section{Preliminaries}
The quaternionic skew-field $\bH$ is an algebra of rank $4$ over $\R$ with basis $\{1, i, j, k\}$, where the product is given by $i^2=j^2=k^2=ijk=-1$. For any $q=a_0+a_1i+a_2j+a_3k\in\bH$ we denote by $q_r=a_0$ and $q_v=a_1i+a_2j+a_3k$, the real and imaginary parts of $q$, respectively. Let the pure quaternions be $\bP=\mathrm{span}_{\R}\,\{i,j,k\}$. The conjugate of $q$ is given by $q^*=q_r-q_v$ and the norm is defined by $|q|^2=qq^*$. Two quaternions $q,q'\in\bH$ are called similar, if there exists a unitary quaternion $s$ such that $s^{*}q' s=q$. Similarity is an equivalence relation and we denote by $[q]$ the equivalence class containing $q$. A necessary and sufficient condition for the similarity of $q$ and $q'$ is given by $q_r=q'_r \textrm{ and }|q_v|=|q'_v|$, see \cite[theorem 2.2.6]{R}.
We will denote the set of all equivalence classes of the elements of a set $X \subseteq \Hq$ by $[X]$. Then,
\[
[X]=\bigcup_{x \in X} [x]
\]

Let $\bH^n$ be the $n$-dimensional $\bH$-space. The norm of $\vc{x}\in \bH^n$ is $|\vc{x}|^2=\vc{x}^*\vc{x}$. The disk with center $\vc{a}\in\bH^n$ and radius $r>0$ is the set $\D_{\bH^n}(\vc{a},r)=\{\vc{x}\in\bH^n:|\vc{x}-\vc{a}|\leq r\}$ and its boundary is the sphere $\bS_{\bH^n}(\vc{a},r)$. In particular, if $\vc{a}=\vc{0}$ and $r=1$, we simply write $\D_{\bH^n}$ and $\bS_{\bH^n}$. With this notation, the group of unitary quaternions is $\bS_{\bH}$ whereas $\bS_{\bP}$ denotes the unit sphere over the pure quaternions.

Let $\M_n (\bH)$ be the set of all $n\times n$ matrices with entries over $\bH$. The set
\[W(A)=\{\vc{x}^*A\vc{x}:\vc{x}\in \bS_{\bH^n}\}\]
is called the quaternionic numerical range of $A$ in $\bH$. From the above definition we see that the quaternionic numerical range of $A \in \M_n(\Hq)$ is the subset of $\Hq$ containing the images of the quadratic function $f_A(\vc{x})=\vc{x}^*A\vc{x}$ over the quaternionic unitary sphere,  $\vc{x} \in \bS_{\Hq^n}$. The numerical range is invariant under unitary equivalence, i.e.
\[
W(U^*AU)=W(A)\,,
\]
for every unitary $U\in\mathcal{M}_n(\bH)$ \cite[theorem 3.5.4]{R}.

It is well known that if $q\in W(A)$ then $[q]\subseteq W(A)$, see \cite[page 38]{R}. This means that if $q_1\sim q_2$ and $q_2\in W(A)$ then $q_1\in W(A)$. For simplicity we just say that $q_2$ belongs to $W(A)$ by similarity. Therefore, it is enough to study the subset of complex elements in each similarity class. This set is known as $B(A)$, the bild of $A$:
\[
B(A)=W(A)\cap\C.
\]
We will freely use both notations $B(A)$ and $W(A)\cap\C$ for the bild of $A$. Although the bild may not be convex, the upper bild $B^+=W(A)\cap\C^+$ is always convex, see \cite{ST}. Analogously, the lower bild $B^-=W(A)\cap\C^-$ is also always convex. Note that $\C^+\cap \C^-=\R$, $B=B^+\cup B^-$ and $B^+\cap B^-\subseteq\R$.

For $p \in \bP$, let $\text{Span}\{1,p\}^+=\{\alpha+\beta p: \alpha \in \R, \beta \in \R_0^+\}$. For any $w \in W(A)$ and $p \in \bP$, let $w_{(p)}$ be the representative of the class $[w]$ in $\mathrm{span}\,\{1,p\}^+$, that is,
\[
\{w_{(p)}\}=[w]\cap\mathrm{span}\,\{1,p\}^+.
\]
In particular,
\[
\{w_{(i)}\}=[w]\cap\mathrm{span}\,\{1,i\}^+\subseteq B^+
\]
and we can write $w_{(i)}=w_r+i|w_v|$.

Let $V\subseteq\bH\cong\R^4$ be a real subspace of $\bH$. We denote by $\pi_V$ the canonical $\R$-linear projection $\pi_V:\bH\to V$.

For $h_0, h_1 \in \Hq$ we will denote by $[h_0,h_1]$ the set of convex linear combinations of $h_0$ and $h_1$:
\[
[h_0,h_1]=\{(1-\alpha)h_0+\alpha h_1: \alpha \in [0,1]\}.
\]
\begin{defn} \label{def_ss and center}
Let $B$ be a subset of a vector space. We say the set $B$ is star-shaped if there is a vector $b_0 \in B$ such that $[b_0,b]\subseteq B\,\,,\,\forall b\in B$. The star-center of a set $B$ is defined to be
\[
\cnt(B) =\{b_0 \in B: [b_0,b]\subseteq B, \text{ for any }  b \in B\}.
\]
\end{defn}
For simplicity, we refer to the star-center of a set as the center.

\section{Star-shapedness of the bild and numerical range}
The upper bild and the bild fully specify the numerical range, but the first is considered better suited to represent the quaternionic numerical range. This is not only because it is convex but also because it has the advantage of containing one single element from each similarity class. In a sense, the upper bild can be interpreted as the set of equivalence classes for the similarity relation $\sim$, that is, the quotient set $B^+=W/\sim$. However, from the convexity of the upper bild we cannot infer about the convexity of the numerical range, as the first is always convex and the latter is not.

The first result of this paper relates the convexity of the bild with the convexity of the numerical range. This is a known result (see \cite[page 53]{Zh}), however we present a different proof based on elementary properties of the numerical range.

\begin{thm}\label{theo convex W and B}
Let $A\in  \M_n (\Hq)$. Then $W(A) \cap \C$ is convex if and only if  $W(A)$ is convex.
\end{thm}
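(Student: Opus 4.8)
The plan is to prove the two implications separately. The forward one is immediate: if $W(A)$ is convex then, since $\C$ is an $\R$-linear subspace of $\Hq\cong\R^4$ and hence convex, the bild $B(A)=W(A)\cap\C$ is an intersection of two convex sets and is therefore convex.

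For the converse, assume $B(A)$ is convex. First I would record a concrete description of $W(A)$ in terms of the upper bild. Using the preliminaries — each class meets $\mathrm{span}\{1,i\}^+$ in the single point $w_{(i)}=w_r+i|w_v|$, and $q\in W(A)$ forces $[q]\subseteq W(A)$ — one obtains
\[
w\in W(A)\iff w_{(i)}\in B^+\iff\bigl(w_r,|w_v|\bigr)\in\widetilde B ,
\]
where $\psi(\alpha+\beta i)=(\alpha,\beta)$ is the standard linear identification $\C\cong\R^2$ and $\widetilde B:=\psi(B^+)\subseteq\R\times\R_0^+$. Since $q\in W(A)$ implies $[q]\subseteq W(A)$ and $\alpha-\beta i$ is similar to $\alpha+\beta i$ (same real part, same $|$imaginary part$|$), a quaternion $\alpha-\beta i\in\C^-$ lies in $W(A)$ iff $\alpha+\beta i$ does; hence $B^-=\overline{B^+}$ and so $\psi(B)=\widetilde B\cup\sigma(\widetilde B)$, with $\sigma(\alpha,\beta)=(\alpha,-\beta)$. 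Thus the hypothesis ``$B(A)$ convex'' reads ``$\widetilde B\cup\sigma(\widetilde B)$ convex in $\R^2$''; and since $\sigma$ negates the second coordinate and fixes $\R\times\{0\}$, we have $\widetilde B=\{(\alpha,\beta)\in\widetilde B\cup\sigma(\widetilde B):\beta\ge0\}$, so in particular $\widetilde B$ is convex.

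Next I would check convexity of $W(A)$ directly. Take $p,q\in W(A)$ and $t\in[0,1]$, put $z=(1-t)p+tq$, and write $r_1=|p_v|$, $r_2=|q_v|$. Because $w\mapsto w_r$ and $w\mapsto w_v$ are $\R$-linear, $z_r=(1-t)p_r+tq_r$ and $z_v=(1-t)p_v+tq_v$, so the triangle and reverse triangle inequalities in $\bP\cong\R^3$ give
\[
\bigl|(1-t)r_1-tr_2\bigr|\le|z_v|\le(1-t)r_1+tr_2 .
\]
Set $a=z_r$, $c=(1-t)r_1-tr_2$, $d=(1-t)r_1+tr_2$, so that $c\le d$ and $d\ge|c|$. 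The point $(a,d)$ lies on the segment from $(p_r,r_1)$ to $(q_r,r_2)$, both in $\widetilde B$, while $(a,c)$ lies on the segment from $(p_r,r_1)\in\widetilde B$ to $(q_r,-r_2)\in\sigma(\widetilde B)$; by convexity of $\widetilde B\cup\sigma(\widetilde B)$ these two points, and hence the entire vertical segment $\{(a,s):c\le s\le d\}$, lie in $\widetilde B\cup\sigma(\widetilde B)$. Since $|z_v|\in[|c|,d]\subseteq[c,d]$ and $|z_v|\ge0$, the point $(z_r,|z_v|)$ lies in $\widetilde B$, so $z\in W(A)$ by the displayed equivalence. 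Therefore $W(A)$ is convex.

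I expect the only genuine work to be the bookkeeping in the last step: verifying that the two auxiliary points $(a,c)$ and $(a,d)$ really lie in $\widetilde B\cup\sigma(\widetilde B)$ — this is precisely where the lower (reflected) bild is used — that $d\ge|c|$ so that $|z_v|$ actually falls inside the vertical segment $\{(a,s):c\le s\le d\}$, and that its second coordinate $|z_v|$ is nonnegative, so that the resulting point belongs to $\widetilde B$ itself rather than merely to the reflected copy. Everything else is formal; in particular no reduction to a planar ``solid of revolution'' is needed, since the triangle inequality in $\bP\cong\R^3$ already furnishes the required two-sided bound on $|z_v|$.
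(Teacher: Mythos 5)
Your proof is correct and follows essentially the same route as the paper's: fix the real part, trap $|z_v|$ between two convex combinations of bild points (one of which uses the reflected/conjugate copy $B^-$), and conclude by convexity of the bild along the resulting vertical segment, landing in $B^+$ because the trapped value is nonnegative. The only cosmetic differences are that the paper invokes the known convexity of $B^+$ to place its upper endpoint $\omega=\alpha a_{(i)}+(1-\alpha)b_{(i)}$ in $B^+$, whereas you obtain both endpoints directly from the hypothesis, and that your reverse-triangle lower bound $|z_v|\ge |c|$ is a sharper (but unnecessary) version of the paper's interpolation between $\omega$ and $\omega^*$.
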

\begin{proof}
It is enough to prove that, if $W(A) \cap \C$ is convex then $W(A)$ is convex.
Let $a, b \in W(A)$ and $\alpha \in [0,1]$. We need to show that $c=\alpha a+(1-\alpha) b \in W(A)$. The quaternion $c=c_r+c_v$ has  $c_r=\alpha a_r+(1-\alpha)b_r$ and
\begin{equation}\label{modulo c_v}
|c_v|=|\alpha a_v+(1-\alpha)b_v|\leq \alpha |a_v|+(1-\alpha)|b_v|.
\end{equation}
We will prove that $c_{(i)} \in B^+$, thus proving by similarity that $c\in W(A)$. Since the upper bild is convex,
\begin{equation}\label{omega in B^+}
\omega=\alpha a_{(i)} +(1-\alpha) b_{(i)} = c_{r} + \big(\alpha|a_v|+(1-\alpha)|b_v|\big)i  \in  B^+.
\end{equation}

\smallskip

By similarity, $\omega^* \in  B^-$. Note that $c_{(i)}=c_{(i),r}+c_{(i),v}=c_r+i|c_v|$. From (\ref{omega in B^+}), $c_{(i),r}=\omega_r=\omega^*_r$ and from (\ref{modulo c_v}), $ | c_{(i),v}|\leq |\omega_v| $. Therefore,
\[
-\frac{\omega_v}{i}\leq\frac{c_{(i),v}}{i}\leq\frac{\omega_v}{i}\,,
\]
and so there is $\beta \in [0,1]$ such that $c_{(i),v}=\beta\omega_v+(1-\beta)\omega_v^*$. Hence, $c_{(i)}=\beta\omega+(1-\beta)\omega^*$. By hypothesis, $W(A) \cap \C $ is convex and so $c_{(i)} \in W(A) \cap \C$.
\end{proof}
Any quaternionic matrix $A \in \M_n (\Hq)$ can be written as $A=\tilde H+\tilde S$, with $\tilde H =\frac{A+A^*}{2}$ hermitian and $\tilde S =\frac{A-A^*}{2}$ skew-hermitian. Let $U \in \M_n(\Hq)$ be the unitary matrix that diagonalize $\tilde S$, i.e., $S=U^*\tilde S U=\mathrm{diag}\,(s_1,\ldots,s_n)$. Since the numerical range is invariant under unitary equivalence, we can work with $U^*AU$, that can be written in the form $U^*AU= U^*\tilde H U+U^*\tilde S U=H+S$. Since $H$ is hermitian $f_{H}(\vc{x}) \in \bR$ and since $S$ is skew-hermitian the real part of $f_{S}(\vc{x})$  is zero, see \cite[corollary 3.5.3]{R}.

We claim that $0 \in W_{\Hq}(S)$. To prove this we will find a vector $\vc{x} \in \bS_{\Hq^n}$ such that $f_{S}(\vc{x})=0$. Let $x_3=\ldots=x_n=0$, then take $z_1$ and $z_2$ in $\bS_{\Hq}$ such that $q_1=z_1^* s_{1} z_1 \in \C^+$ and $q_2=z_2^*  s_{2} z_2 \in \C^-$. The quaternions $q_1$ and $q_2$ are either zero or the representatives of $s_1$ in $\C^+$ and $s_2$ in $\C^-$, respectively. Thus they are pure complex. Finally, choose $\beta \in [0,1]$ such that $\beta q_1+(1-\beta)q_2=0$. Take $x_1=\beta^{1/2} z_1$ and $x_2=(1-\beta)^{1/2} z_2$. Then, the vector $\vc{x} \in \bS_{\Hq^n}$ is in the stated conditions. It is now clear that $W(A) \cap \bR\neq \emptyset$. In fact, take vector $\vc{x}$ and compute $f_A(\vc{x})=f_{H}(\vc{x})+f_{S}(\vc{x})=f_{H}(\vc{x}) \in \bR$. We have proved the following result. \footnote{This result is apparently known for some time, as it appears in the thesis of \cite{Siu}, supervised by Au-Yeung, however it has never been published before, (to the best of our knowledge). In spite of this, Au-Yeung in \cite[corollary 1]{Ye1}  apropos of the connectedness of $W_{\Hq} \cap \bR $, and citing a result from \cite{J}, states the possibility of $W_{\Hq} \cap \bR =\emptyset$. This possibility is also stated by \cite[theorem 9.2]{Zh} and \cite[corollary 2.10]{K}, repeating again the same result by \cite{J}, (although \cite{K} doesn't cite it).}

\begin{prop}\label{n.r. has reals}
For any $A \in \M_n(\Hq)$, $W(A) \cap \bR\neq \emptyset$.
\end{prop}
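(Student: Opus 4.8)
The plan is to produce a single unit vector $\vc{x}\in\bS_{\Hq^n}$ at which $f_A(\vc{x})=\vc{x}^*A\vc{x}$ is real. First I would decompose $A=\tilde H+\tilde S$ with $\tilde H=\frac{A+A^*}{2}$ Hermitian and $\tilde S=\frac{A-A^*}{2}$ skew-Hermitian, and, appealing to the spectral theorem for skew-Hermitian quaternionic matrices, pick a unitary $U$ with $S:=U^*\tilde S U=\mathrm{diag}(s_1,\dots,s_n)$, each $s_k$ a pure quaternion. Since $W(A)=W(U^*AU)$ and $U^*AU=H+S$ with $H:=U^*\tilde H U$ Hermitian, and since $f_H(\vc{x})\in\bR$ while $\RE f_S(\vc{x})=0$ for every $\vc{x}$ by \cite[corollary 3.5.3]{R}, it suffices to find $\vc{x}\in\bS_{\Hq^n}$ with $f_S(\vc{x})=0$; then $f_A(\vc{x})=f_H(\vc{x})\in\bR$.

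To find such an $\vc{x}$ I would activate only two coordinates. For $k=1,2$ choose $z_k\in\bS_{\Hq}$ so that $q_1:=z_1^*s_1z_1$ is the representative of $[s_1]$ in $\C^+$ and $q_2:=z_2^*s_2z_2$ is the representative of $[s_2]$ in $\C^-$; because $s_1$ and $s_2$ are pure, $q_1=|s_1|\,i$ and $q_2=-|s_2|\,i$ are purely imaginary complex numbers (possibly $0$) lying on opposite sides of the origin on the imaginary axis. Hence there is $\beta\in[0,1]$ with $\beta q_1+(1-\beta)q_2=0$. Setting $\vc{x}=\big(\beta^{1/2}z_1,\,(1-\beta)^{1/2}z_2,\,0,\dots,0\big)$ gives $|\vc{x}|^2=\beta|z_1|^2+(1-\beta)|z_2|^2=1$, and since $S$ is diagonal, $f_S(\vc{x})=\beta\,z_1^*s_1z_1+(1-\beta)\,z_2^*s_2z_2=\beta q_1+(1-\beta)q_2=0$. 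Thus $0\in W(S)$, and the same $\vc{x}$ yields $f_A(\vc{x})=f_H(\vc{x})\in\bR$, so $W(A)\cap\bR\neq\emptyset$.

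I do not expect a genuine obstacle; the points that need care are the use of the quaternionic spectral theorem to diagonalize $\tilde S$, the observation that for diagonal $S$ the quadratic form decouples as $f_S(\vc{x})=\sum_k|x_k|^2\big(z_k^*s_kz_k\big)$ once one writes $x_k=|x_k|z_k$ with $z_k$ unit, and the fact that conjugating a nonzero pure quaternion by suitable unit quaternions reaches both $|s_k|\,i$ and $-|s_k|\,i$, so that $q_1\in\C^+$ and $q_2\in\C^-$ as needed. The construction sets $x_3=\dots=x_n=0$, so it is written for $n\ge 2$.
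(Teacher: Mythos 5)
Your proposal is correct and follows essentially the same route as the paper: split $A=\tilde H+\tilde S$, unitarily diagonalize the skew-Hermitian part, conjugate $s_1,s_2$ into $\C^+$ and $\C^-$ respectively, and balance them with a convex weight $\beta$ supported on the first two coordinates so that $f_S(\vc{x})=0$ and hence $f_A(\vc{x})=f_H(\vc{x})\in\bR$. Your closing remark that the construction requires $n\ge 2$ is a caveat the paper leaves implicit (for $n=1$ and $A=[i]$ one has $W(A)=\bS_{\bP}$, which misses $\bR$), but otherwise the two arguments coincide.
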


From now on, we fix a matrix with quaternionic entries, $A \in \M_n(\Hq)$, and we denote the quaternionic numerical range of $A$ simply by $W=W(A)$.

Let $q_1, q_2 \in \bS_\bP$.  We say an element $a_1 \in \mathrm{span}\{1, q_1\}$ is $\dot\sim$-similar to  $a_2 \in \mathrm{span}\{1, q_2\}$, if and only if, for some $r, s \in \R$,
\begin{equation*}
a_1=r+s q_1 \text{ and } a_2=r+s q_2,
\end{equation*}
in which case we write $a_1\dot\sim a_2.$ We say that $A_1\subseteq \mathrm{span}\{1, q_1\}$ and $A_2\subseteq \mathrm{span}\{1, q_2\}$ are $\dot\sim$-similar, and denote it by $A_1 \dot\sim A_2$, if and only if, for any $a_1 \in A_1$ there is an $a_2 \in A_2$ such that $a_1 \dot\sim a_2$, and vice versa. When two sets are $\dot\sim$-similar they share some properties, namely convexity. In fact, if $A_1$ is convex we can conclude that $A_2$ is convex. Take any $a_2,\tilde a_2\in A_2$. Then, there are $a_1, \tilde a_1 \in A_1$, such that $a_1\dot\sim a_2$ and $\tilde a_1\dot\sim \tilde a_2$. For any $\alpha \in [0,1]$ it is a matter of simple calculations to note that
\[
\alpha a_1+ (1-\alpha)\tilde a_1 \dot\sim \alpha a_2+ (1-\alpha)\tilde a_2.
\]

Now, since $A_1 \dot\sim A_2$ and $A_1$ is convex  we conclude that $\alpha a_2+ (1-\alpha)\tilde a_2 \in A_2$. Therefore $A_2$ is also convex. A similar argument proves that the centers are $\dot\sim$-similar for any two $\dot\sim$-similar sets $A_1$ and $A_2$, since whenever a segment is in $A_1$ the $\dot\sim$-similar segment must be in $A_2$. That is, $\cnt(A_1) \dot\sim \cnt(A_2)$ whenever $A_1 \dot\sim A_2$.

Define, for $q\in\bS_{\bP}$, $W^{(q)}=W\cap \mathrm{span}\{1,q\},$  $W^{(q)+}=W \cap \mathrm{span}\{1,q\}^+$ and  $W^{(q)-}=W \cap \mathrm{span}\{1,q\}^-.$

\begin{lemma}\label{lemma_wq+-convex}
For any $q_1, q_2 \in \bS_\bP$ we have:
\begin{enumerate}[(i)]
  \item $W^{(q_1)+}, W^{(q_1)-}$ are convex,
  \item $\cnt\Big(W^{(q_1)}\Big)\dot\sim \cnt\Big(W^{(q_2)}\Big).$
\end{enumerate}
\end{lemma}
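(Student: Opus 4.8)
The plan is to prove both statements by transporting facts we already have about the complex bild $B^+ = W \cap \C^+$ and $B^- = W \cap \C^-$ to the plane $\mathrm{span}\{1,q\}$ for an arbitrary pure unit quaternion $q$, using that any two such planes (together with the real axis they share) are related by a quaternionic unitary. First I would record the following elementary observation: for $q \in \bS_\bP$ there is a unitary quaternion $s$ with $s^* i s = q$; conjugation $h \mapsto s^* h s$ is an $\R$-algebra automorphism of $\Hq$ fixing $\R$ pointwise, it maps $\C = \mathrm{span}\{1,i\}$ onto $\mathrm{span}\{1,q\}$, sends $\C^+$ onto $\mathrm{span}\{1,q\}^+$, and preserves similarity classes; hence it sends $W(A) = W(s^*As)$ bijectively... wait, that is not quite right, so let me instead use the similarity-class description directly.

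The cleaner route: for each $q \in \bS_\bP$ the map $\phi_q \colon \mathrm{span}\{1,i\}^+ \to \mathrm{span}\{1,q\}^+$, $r + \beta i \mapsto r + \beta q$ (with $\beta \ge 0$), is the composition ``represent in $\C^+$, pass to similarity class, represent in $\mathrm{span}\{1,q\}^+$'', and it is precisely the relation $\dot\sim$ restricted to these two half-planes. Since $[w] \subseteq W$ for every $w \in W$, and every element of $\mathrm{span}\{1,q\}^+$ has its $\C^+$-representative given by $w_{(i)}$, we get $W^{(q)+} = \phi_q(B^+)$, i.e. $W^{(q)+} \dot\sim B^+$. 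Now $\phi_q$ is an affine-linear bijection (in the coordinates $(r,\beta)$ it is the identity), so it preserves convexity; since $B^+$ is convex by \cite{ST}, so is $W^{(q_1)+}$, and likewise $W^{(q_1)-} \dot\sim B^-$ is convex. This gives (i). I would phrase this as: the argument given in the excerpt showing that $\dot\sim$-similar sets share convexity applies with $A_1 = B^+$, $A_2 = W^{(q_1)+}$.

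For (ii), the excerpt already proved that $A_1 \dot\sim A_2$ implies $\cnt(A_1) \dot\sim \cnt(A_2)$. We have $W^{(q)} = W^{(q)+} \cup W^{(q)-}$, with the two pieces meeting exactly in $W \cap \R$, and the decomposition $W^{(q)+} \dot\sim B^+$, $W^{(q)-} \dot\sim B^-$ is compatible across $q$ (both reduce to $B = B^+ \cup B^-$ on the real axis, which $\dot\sim$ fixes pointwise). So $W^{(q_1)} \dot\sim W^{(q_2)}$: given $a_1 \in W^{(q_1)}$, it lies in $W^{(q_1)+}$ or $W^{(q_1)-}$, its $\dot\sim$-partner in $B^{\pm}$ has a $\dot\sim$-partner in $W^{(q_2)\pm} \subseteq W^{(q_2)}$, and symmetrically; one must check that elements on the real axis are handled consistently, which is immediate since $\dot\sim$ is the identity there. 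Applying the ``centers of $\dot\sim$-similar sets are $\dot\sim$-similar'' principle from the excerpt to $W^{(q_1)} \dot\sim W^{(q_2)}$ yields $\cnt(W^{(q_1)}) \dot\sim \cnt(W^{(q_2)})$.

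The main obstacle is bookkeeping rather than depth: one must be careful that $\dot\sim$ between sets is defined component-wise on the representatives $r + s q$ with $s$ allowed to range over all of $\R$ (not just $\R_0^+$), so that $W^{(q)}$ — which includes both the $\mathrm{span}\{1,q\}^+$ part and its conjugate — is genuinely $\dot\sim$-similar to $B$; the potential pitfall is a sign ambiguity at $s = 0$ (the real axis), which is harmless because there the relation is trivial, and the gluing of the ``$+$'' and ``$-$'' halves along $W \cap \R$ is consistent because $B^+ \cap B^- \subseteq \R$ and $W^{(q)+} \cap W^{(q)-} \subseteq \R$. Once this is set up, (i) and (ii) are immediate applications of the two general facts about $\dot\sim$-similar sets established in the paragraph preceding the lemma.
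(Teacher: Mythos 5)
Your proposal is correct and follows essentially the same route as the paper: the paper's proof likewise deduces $W^{(q_1)}\dot\sim W^{(q_2)}$ and $W^{(q_1)+}\dot\sim W^{(q_2)+}$ from the similarity-closure of $W$, invokes the known convexity of $B^+=W^{(i)+}$, and then applies the two transfer principles for $\dot\sim$-similar sets (preservation of convexity and of centers) established in the paragraph preceding the lemma. You merely spell out the representative map $\phi_q$ and the gluing along $W\cap\R$ that the paper leaves implicit.
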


\begin{proof}
The numerical range is such that, by similarity, $W^{(q_1)}\dot\sim W^{(q_2)}$, for any $q_1,q_2 \in \bS_\bP$. It is also an immediate conclusion of numerical range's closedness to similarity that $W^{(q_1)+}\dot\sim W^{(q_2)+}$, for any $q_1,q_2 \in \bS_\bP$. It is known that the upper bild $W^{(i)+}=B^+$ is convex, thus from the previous discussion, we have that $W^{(q)+}$ is also convex for any $q \in\bS_\bP$. Moreover from  $W^{(q_1)}\dot\sim W^{(q_2)}$ we know that $\cnt\Big(W^{(q_1)}\Big)\dot\sim \cnt\Big(W^{(q_2)}\Big)$. \end{proof}

As a consequence of this lemma we only need to study the center of one of the $W^{(q)}$'s and the natural choice is to take $q=i$, that is, we only need to study the center of the bild $B=W^{(i)}$.

\begin{thm}\label{theo star shape}
The quaternionic numerical range $W$ is star-shaped and $W\cap \bR \subseteq \cnt(W)$.
\end{thm}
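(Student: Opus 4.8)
The plan is to show directly that every real point of $W$ lies in $\cnt(W)$, which yields both claims at once. By Proposition \ref{n.r. has reals} the set $W\cap\R$ is nonempty, so we may fix $r_0\in W\cap\R$; the goal is to prove $[r_0,w]\subseteq W$ for every $w\in W$. Since $r_0$ will have been arbitrary among the real points of $W$, this gives $W\cap\R\subseteq\cnt(W)$, and as $W\cap\R\neq\emptyset$ it also establishes that $W$ is star-shaped.

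The key step is to confine the whole segment $[r_0,w]$ to a single plane $\mathrm{span}\{1,q\}$. Given $w\in W$, I would choose $q\in\bS_\bP$ so that $w\in\mathrm{span}\{1,q\}^+$: if $w_v\neq0$ set $q=w_v/|w_v|$, so that $w=w_r+|w_v|\,q\in\mathrm{span}\{1,q\}^+$; if $w_v=0$ (that is, $w\in\R$) take any $q\in\bS_\bP$. In either case $w\in W\cap\mathrm{span}\{1,q\}^+=W^{(q)+}$. On the other hand $r_0$ is real, hence $r_0\in\R\subseteq\mathrm{span}\{1,q\}^+$, so $r_0\in W^{(q)+}$ as well.

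Now I invoke Lemma \ref{lemma_wq+-convex}(i): $W^{(q)+}$ is convex for every $q\in\bS_\bP$ — this is precisely where the fact that all two-dimensional real subalgebras of $\Hq$ containing $\R$ are isomorphic enters, through $W^{(q)+}\dot\sim B^+$. Since $r_0,w\in W^{(q)+}$ and $W^{(q)+}$ is convex, $[r_0,w]\subseteq W^{(q)+}\subseteq W$. As $w\in W$ was arbitrary, $r_0\in\cnt(W)$; letting $r_0$ range over $W\cap\R$ gives $W\cap\R\subseteq\cnt(W)$, and the star-shapedness of $W$ follows from $W\cap\R\neq\emptyset$.

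I do not expect a genuine obstacle here: all the real work has been done in Proposition \ref{n.r. has reals} and Lemma \ref{lemma_wq+-convex}, and the theorem is a short assembly of them. The only point deserving a moment's care is the selection of the plane $\mathrm{span}\{1,q\}$ — in particular, making an (immaterial) arbitrary choice of $q$ when $w$ is itself real, so that one argument covers both the real and non-real cases uniformly. The conceptual content, namely that convexity of the upper bild in the plane $\mathrm{span}\{1,i\}$ transports by similarity to every plane through $\R$, is exactly what Lemma \ref{lemma_wq+-convex} packages, so nothing further is needed.
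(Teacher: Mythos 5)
Your proof is correct and follows essentially the same route as the paper: obtain a real point via Proposition \ref{n.r. has reals}, place $w$ in a plane $\mathrm{span}\{1,q\}$, and conclude by the convexity of $W^{(q)+}$ from Lemma \ref{lemma_wq+-convex}. The only (harmless) difference is that you normalize $w$ into the upper half-plane $W^{(q)+}$ directly, whereas the paper works with $W^{(q)}=W^{(q)+}\cup W^{(q)-}$ and uses that the real point lies in both halves.
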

\begin{proof}
By proposition \ref{n.r. has reals}, there is $r\in W(A)\cap\R$. For every $\omega\in W$, there is $q\in\bS_{\bP}$ such that $\omega\in W^{(q)}$. Since $W^{(q)}=W^{(q)+}\cup W^{(q)-}$ and using lemma \ref{lemma_wq+-convex} we have that $[r,\omega]\subseteq W^{(q)}\subseteq W$. Hence, the numerical range is star-shaped. Moreover, $W\cap\R\subseteq\cnt(W)$.
\end{proof}
The numerical range $W(A)$ is contained in $\R$ if, and only if, $A$ is hermitian, see \cite[corollary 3.5.3]{R}.. The next result follows trivially from theorem \ref{theo star shape}.
\begin{cor}\label{center_hermitian}
If $A$ is hermitian then $\cnt(W(A))=W(A)$.
\end{cor}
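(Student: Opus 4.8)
The plan is to read this off immediately from Theorem~\ref{theo star shape} together with the hermitian characterization recalled just before the statement. First I would observe that, since $A$ is hermitian, \cite[corollary 3.5.3]{R} gives $W(A)\subseteq\R$, and therefore $W(A)\cap\R=W(A)$. Then Theorem~\ref{theo star shape}, which asserts $W(A)\cap\R\subseteq\cnt(W(A))$, yields at once $W(A)\subseteq\cnt(W(A))$.

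For the reverse inclusion there is nothing to do: by Definition~\ref{def_ss and center} the star-center $\cnt(W(A))$ is by construction a subset of $W(A)$. Combining the two containments gives $\cnt(W(A))=W(A)$, as claimed. (One may also note, as a sanity check, that $W(A)$ is connected and contained in $\R$, hence an interval, so it is trivially convex and equal to its own star-center; but this is not needed, since Theorem~\ref{theo star shape} already delivers the nontrivial inclusion directly.)

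I do not expect any obstacle here: the corollary is a one-line consequence of the preceding theorem, and the only point requiring a moment's care is that the definition of star-center already forces $\cnt(B)\subseteq B$, which supplies the easy inclusion for free.
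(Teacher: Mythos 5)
Your proof is correct and is exactly the argument the paper intends: it states the corollary ``follows trivially from theorem \ref{theo star shape}'' after recalling that $W(A)\subseteq\R$ for hermitian $A$, which is precisely your chain $W(A)=W(A)\cap\R\subseteq\cnt(W(A))\subseteq W(A)$. No differences worth noting.
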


\begin{lemma} \label{center_bild_conj}
The center of the bild is closed under conjugation, i.e.
\[
\cnt(W\cap \C) =\cnt(W\cap \C)^*.
\]
\end{lemma}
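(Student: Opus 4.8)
The plan is to exploit the fact that complex conjugation is an $\R$-linear involution of $\C$ which preserves the bild $B = W\cap\C$, and then transport this symmetry through the definition of the center.

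First I would record that $B$ itself is closed under conjugation. Indeed, for any $q\in\C$ one has $\bar q_r = q_r$ and $|\bar q_v| = |q_v|$, so $\bar q\sim q$; since $W$ is closed under similarity, $q\in B$ forces $\bar q\in W\cap\C = B$. Hence $B^* = B$. Next, let $c\colon\C\to\C$ denote the conjugation map $c(z)=\bar z$. It is $\R$-linear, in particular affine, so it sends segments to segments: for any $h_0,h_1\in\C$ we have $c([h_0,h_1]) = [c(h_0),c(h_1)]$, that is, $[h_0,h_1]^* = [\bar h_0,\bar h_1]$.

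Now take $b_0\in\cnt(B)$ and an arbitrary $b\in B$. By the first step $\bar b\in B$, so by the definition of the center $[b_0,\bar b]\subseteq B$. Applying $c$ and using the previous paragraph together with $B^* = B$ gives $[\bar b_0,b] = [b_0,\bar b]^*\subseteq B^* = B$. Since $b\in B$ was arbitrary, $\bar b_0\in\cnt(B)$. This proves $\cnt(B)^*\subseteq\cnt(B)$; applying the same inclusion to $\bar b_0$ (or simply noting that $c$ is an involution) yields the reverse inclusion, hence $\cnt(W\cap\C) = \cnt(W\cap\C)^*$.

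There is no substantial obstacle here beyond the opening observation that $B$ is conjugation-invariant, which is itself immediate from the similarity criterion $q\sim q' \iff q_r = q'_r,\ |q_v| = |q'_v|$ together with closedness of $W$ under similarity; everything after that is a formal manipulation of the definition of star-center under an affine symmetry of $\C$.
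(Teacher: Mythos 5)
Your proof is correct and follows essentially the same route as the paper's: take a center element, pair it with the conjugate of an arbitrary bild element, use conjugation-invariance of the bild to move the resulting segment back, and conclude by symmetry. The only difference is cosmetic — you explicitly justify that $B^*=B$ via the similarity criterion, which the paper takes as known.
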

\begin{proof}
Assume $c \in \cnt(W\cap \C)$. Let $\omega $ be any element of the bild $\omega \in W \cap \C$. Since the bild is closed for conjugation, $\omega^* \in W \cap \C$. Then $c$ being in the center implies that $\alpha c+(1-\alpha)\omega^* \in W \cap \C$, for any $\alpha\in [0,1]$. And again using the bild's closedness to conjugation we conclude that $\alpha c^*+(1-\alpha)\omega \in W \cap \C$. Since this is true for any $\omega\in W \cap \C$, $c^* \in  \cnt(W\cap \C)$. The converse inclusion follows similar steps.
\end{proof}

We now establish the equality between the center of the bild and the complex part of the center of the numerical range.

\begin{prop}\label{bilds_center}
We have:
\[\cnt(W) \cap \C = \cnt(W \cap \C).\]
\end{prop}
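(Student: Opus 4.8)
The plan is to establish the two inclusions separately. The inclusion $\cnt(W)\cap\C\subseteq\cnt(W\cap\C)$ is immediate: if $c\in\cnt(W)\cap\C$ and $b\in W\cap\C$, then $[c,b]\subseteq W$ because $c\in\cnt(W)$, and $[c,b]\subseteq\C$ because $c,b\in\C$, so $[c,b]\subseteq W\cap\C$; hence $c\in\cnt(W\cap\C)$. All the work lies in the reverse inclusion $\cnt(W\cap\C)\subseteq\cnt(W)\cap\C$.

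Since $\cnt(W\cap\C)\subseteq\C$ automatically, it suffices to prove that every $c\in\cnt(B)$ lies in $\cnt(W)$, i.e.\ that $[c,w]\subseteq W$ for each $w\in W$. Fix $w\in W$ and $\alpha\in[0,1]$, put $P=(1-\alpha)c+\alpha w$, and write $\rho=P_r=(1-\alpha)c_r+\alpha w_r\in\R$ and $\tau=|P_v|=|(1-\alpha)c_v+\alpha w_v|\ge0$. Then $P$ is similar to $\rho+i\tau$, since they share the same real part and the same norm of vector part. As every similarity class meeting $W$ is contained in $W$, and $\rho+i\tau\in\C^+$, it is enough to show $\rho+i\tau\in W\cap\C^+=B^+$. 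Writing $u=(1-\alpha)|c_v|+\alpha|w_v|$ and $\ell=(1-\alpha)|c_v|-\alpha|w_v|$, the triangle inequality and its reverse form give $|\ell|\le\tau\le u$.

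Next I would sandwich $\rho+i\tau$ between two points of $B^+$ with real part $\rho$ and conclude by convexity of $B^+$. The upper endpoint is $U:=(1-\alpha)c_{(i)}+\alpha w_{(i)}=\rho+iu$, which lies in $B^+$ since $c_{(i)},w_{(i)}\in B^+$ and $B^+$ is convex. The lower endpoint is where centrality of $c$ in the whole bild is used: by Lemma \ref{center_bild_conj} we have $c_{(i)}\in\{c,c^*\}\subseteq\cnt(B)$, while $w_{(i)}^*\in B^-\subseteq B$, so $(1-\alpha)c_{(i)}+\alpha w_{(i)}^*=\rho+i\ell\in B$; since the bild is closed under conjugation, $\rho+i|\ell|\in B$ as well, and, having nonnegative imaginary part, it lies in $B\cap\C^+=B^+$. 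Finally, by convexity of $B^+$ the segment $[\rho+i|\ell|,\,\rho+iu]$ is contained in $B^+$; it consists of the numbers $\rho+it$ with $|\ell|\le t\le u$, and therefore contains $\rho+i\tau$. Hence $\rho+i\tau\in B^+$, as needed. (When $w\in\R$ or $c\in\R$ the construction degenerates but still works; those cases also follow at once from $c\in\cnt(B)$ or from Theorem \ref{theo star shape}.)

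The step I expect to be the main obstacle is producing the lower endpoint. Convexity of $B^+$ by itself does not control the points of $[c,w]$ whose vector part is small, because $B^+$ need not reach the real axis over the abscissa $\rho$; it is precisely the hypothesis that $c$ lies in the center of the \emph{whole} bild $B$ --- so that the segment joining $c_{(i)}\in B^+$ to the lower-bild point $w_{(i)}^*\in B^-$ remains inside $B$ --- that supplies a point of $B$ at height $\le\tau$ on the vertical line through $\rho$. Reflecting it into $B^+$ and using convexity of $B^+$ then closes the argument. The remaining ingredients --- the similarity reduction to $\rho+i\tau$, the two triangle-inequality bounds, conjugation-closedness of the bild, and the identity $B\cap\C^+=B^+$ --- are routine.
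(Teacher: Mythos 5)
Your proof is correct and follows essentially the same route as the paper's: reduce to showing the complex representative of $\alpha c+(1-\alpha)w$ lies in $B^+$ by sandwiching it between two points of equal real part, the upper one from convexity of $B^+$ and the lower one from centrality of $c_{(i)}$ in the whole bild together with conjugation-closedness (Lemma \ref{center_bild_conj}). The only cosmetic differences are that you use the reverse triangle inequality where the paper invokes Cauchy--Schwarz, and you reflect the lower point $\rho+i\ell$ into $\C^+$ rather than case-splitting between the two conjugate candidates as the paper does.
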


\begin{proof}
The inclusion $\cnt(W) \cap \C \subseteq \cnt(W \cap \C)$ is obvious since a complex element in the center of $W$ must be in the center of $W\cap \C$.

For the converse inclusion, starting with $c \in \cnt(W \cap \C)$, we will prove that $y=\alpha c+(1-\alpha) \omega\in W$ for any $\alpha \in [0,1]$ and $\omega \in W$.

We can assume, without loss of generality, that $ c=c_{(i)} \in \C^+$. Since any quaternion $y$ can be written as the sum of a real with a pure quaternion, we may write $y=y_r+|y_v|q$, with $q\in\bS_{\bP}$. We have:
\[
y=\Big(\alpha c_r+(1-\alpha)w_r \Big)+ \Big|\alpha c_v+(1-\alpha)w_v \Big|q.
\]
By similarity, it is enough to prove that $y_{(i)} \in B^+$. With this purpose, we will find two elements $a, b\in B^+$ such that
\begin{equation}\label{real part equal}
a_r=b_r=y_r\,\,\,\textrm{and}\,\,\,|a_v|\leq|y_v|\leq|b_v|.
\end{equation}

In this case, by convexity of the upper bild, $y_{(i)} \in B^+$ since $y_{(i)}= \beta a + (1-\beta)b,$
for some $\beta \in [0,1]$.
Let
\begin{align*}
b & =\alpha c_{(i)} +(1-\alpha) w_{(i)} \\
& = y_r+\Big(\alpha|c_v|+(1-\alpha)|w_v|\Big)i\in W\cap\C^+
\end{align*}

The conclusion that $ b \in B^+$ follows from the fact that $w_{(i)}, c_{(i)}\in B^+$, which is a convex set.

If $\alpha |c_v|-(1-\alpha)|w_v| >0 $ we take $a=\alpha c_{(i)} +(1-\alpha) w_{(i)}^*$, else we take $a=\alpha c_{(i)}^* +(1-\alpha) w_{(i)}$ (clearly, $a\in\C^+$).

We now need to check that $a$ and $b$ are in $W$ and satisfy conditions (\ref{real part equal}). It is trivial to conclude that the real parts are all equal. On the other hand,
\[
|b_v|=\alpha |c_v|+(1-\alpha)|w_v|\geq |\alpha c_v+(1-\alpha)w_v|=|y_v|.
\]
To conclude that $|a_v| \leq |y_v|$ we will use Cauchy-Schwartz inequality. If we look a quaternion $q\in \Hq$ as a vector in $\R^4$, its norm is given by  $\langle q, q\rangle=|q|^2$, where
 $\langle.,.\rangle$ is the usual inner product in real vector spaces. Then we have:
\begin{align*}
|y_v|^2&= \Big\langle \alpha c_v +(1-\alpha) w_v, \alpha c_v +(1-\alpha) w_v \Big\rangle\\
& =\alpha^2 |c_v|^2 +(1-\alpha)^2 |w_v|^2+\alpha(1-\alpha)\Big(\langle c_v, w_v \rangle+ \langle w_v, c_v \rangle\Big )\\
& \geq \alpha^2 |c_v|^2 +(1-\alpha)^2 |w_v|^2-2\alpha(1-\alpha)|c_v| |w_v|\\
& = \big(\alpha |c_{v}| -(1-\alpha) |w_{v}|\big)^2.
\end{align*}

Since $|c_v|=|c_{(i),v}|$ and$|w_v|=|w_{(i),v}|$, we have:
\[
|y_v|^2\geq \big(\alpha |c_{(i),v}| -(1-\alpha) |w_{(i),v}|\big)^2.
\]
Using the equality
$(\alpha c_{(i)} +(1-\alpha) w_{(i)}^*\Big)_v=\alpha |c_{(i),v}|i -(1-\alpha) |w_{(i),v}|i$,
it follows that
\begin{align*}
|y_v|^2 & \geq \Big|\Big( \alpha c_{(i)} +(1-\alpha) w_{(i)}^*\Big)_v\Big|^2\\
& = \Big|\Big( \alpha c_{(i)}^* +(1-\alpha) w_{(i)}\Big)_v\Big|^2\\
& = |a_v|^2 .
\end{align*}
Therefore,
$|a_v|\leq|y_v|\leq|b_v|.$

It remains to prove that $a \in W$. If $a=\alpha c_{(i)} +(1-\alpha) w_{(i)}^*$, by hypothesis $c_{(i)}\in \cnt(W\cap \C)$ and $w_{(i)}^* \in W\cap \C$, then any convex combination of them is also in $W \cap \C$. If $a=\alpha c_{(i)}^* +(1-\alpha)w_{(i)}$ then $a \in W$, because $c_{(i)}^* \in \cnt(W \cap \C)$ by lemma \ref{center_bild_conj}, and $w_{(i)} \in W \cap \C$.
\end{proof}
Next result establish the relation between the center of the numerical range $\cnt(W)$ and the center of the bild $\cnt(W\cap \C)$.
 \begin{thm}\label{C(W)=[C(B)]}
The center of the numerical range is such that
\[
\cnt\big(W\big)=\Big[\cnt(W \cap \C) \Big].
\]
\end{thm}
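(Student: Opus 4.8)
The plan is to prove the two inclusions $\cnt(W)\subseteq[\cnt(W\cap\C)]$ and $[\cnt(W\cap\C)]\subseteq\cnt(W)$ separately, using Proposition~\ref{bilds_center} as the bridge between the complex picture and the full quaternionic picture, together with the fact (Lemma~\ref{lemma_wq+-convex} and the $\dot\sim$-similarity machinery) that $W$ is determined by the bild via similarity classes.

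For the inclusion $\cnt(W)\subseteq[\cnt(W\cap\C)]$, I would take $c\in\cnt(W)$. Its representative $c_{(i)}\in\C^+$ lies in $[c]$, hence in $W\cap\C$; moreover, since the center is invariant under unitary equivalence and $c_{(i)}$ is similar to $c$, one checks that $c_{(i)}\in\cnt(W)$ as well — concretely, for any $\omega\in W$ and $\alpha\in[0,1]$, the segment $[c,\omega']$ lies in $W$ for every $\omega'\in[\omega]$, and applying a unitary (similarity) transformation carries $[c_{(i)},\omega]$ into one of these segments. Thus $c_{(i)}\in\cnt(W)\cap\C=\cnt(W\cap\C)$ by Proposition~\ref{bilds_center}, and so $c\in[c_{(i)}]\subseteq[\cnt(W\cap\C)]$.

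For the reverse inclusion $[\cnt(W\cap\C)]\subseteq\cnt(W)$, I would take $c\in\cnt(W\cap\C)$ and an arbitrary $s\in[c]$, and show $s\in\cnt(W)$. By Proposition~\ref{bilds_center} we already know $c\in\cnt(W)$. Given any $\omega\in W$ and $\alpha\in[0,1]$, I want $[s,\omega]\subseteq W$. Pick $q\in\bS_{\bP}$ with $s\in\mathrm{span}\{1,q\}$; replacing $\omega$ by its representative in $\mathrm{span}\{1,q\}$ (which is in $W$ by similarity) we may assume $\omega\in W^{(q)}$. Now $s$ and $c$ are $\dot\sim$-similar elements of $\mathrm{span}\{1,q\}$ and $\mathrm{span}\{1,i\}=\C$ respectively, and since $c\in\cnt(W\cap\C)\subseteq\cnt(W^{(i)})$ while $W^{(q)}\dot\sim W^{(i)}$, the remark preceding Lemma~\ref{lemma_wq+-convex} that $\cnt(W^{(q_1)})\dot\sim\cnt(W^{(q_2)})$ gives $s\in\cnt(W^{(q)})$. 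Hence $[s,\omega]\subseteq W^{(q)}\subseteq W$, as required.

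The main obstacle I anticipate is the bookkeeping in the first inclusion: showing that $c\in\cnt(W)$ forces the \emph{complex representative} $c_{(i)}$ (rather than just some element of its class) to be in $\cnt(W)$. The cleanest route is unitary invariance of $W$ — if $U^*cU = c_{(i)}$ for a suitable diagonal unitary $U$ acting so that conjugation by $U$ realizes the similarity on the relevant entries — combined with $W(U^*AU)=W(A)$; alternatively one argues directly that every segment $[c_{(i)},\omega]$ is the image under a similarity-preserving map of a segment $[c,\omega']$ with $\omega'\in[\omega]\subseteq W$. Once that point is handled, everything else reduces to Proposition~\ref{bilds_center} and the elementary $\dot\sim$-similarity transfer of convexity and centers already developed before Lemma~\ref{lemma_wq+-convex}.
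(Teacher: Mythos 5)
Your first inclusion is correct, and its key observation is actually a cleaner route than the paper's: writing $u^*cu=c_{(i)}$ for a unit quaternion $u$ and noting that
\[
(1-\alpha)c_{(i)}+\alpha\omega \;=\; u^*\bigl((1-\alpha)c+\alpha\, u\omega u^*\bigr)u,
\]
with $u\omega u^*\in[\omega]\subseteq W$ and $W$ closed under similarity, shows that $\cnt(W)$ is itself closed under similarity. (Your hesitation about realizing this via a diagonal matrix unitary $U$ is a red herring --- what is needed is precisely the quaternion-level fact $q\in W\Rightarrow[q]\subseteq W$, and the ``concretely'' sentence already contains the correct argument.) The paper instead reaches the first inclusion by generalizing Proposition~\ref{bilds_center} to every slice, proving $\cnt(W)\cap\mathrm{span}\{1,q\}=\cnt(W^{(q)})$ for all $q\in\bS_{\bP}$ and then invoking Lemma~\ref{lemma_wq+-convex}.

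The genuine gap is in your second inclusion. To get $s\in\cnt(W)$ you must check $[s,\omega]\subseteq W$ for \emph{every} $\omega\in W$, and the step ``replacing $\omega$ by its representative in $\mathrm{span}\{1,q\}$ we may assume $\omega\in W^{(q)}$'' is not legitimate: the segment $[s,\omega]$ is not the segment $[s,\omega_{(q)}]$, and the two are not even pointwise similar. For $y=(1-\alpha)s+\alpha\omega$ and $y'=(1-\alpha)s+\alpha\omega_{(q)}$ one has $y_r=y'_r$ but, writing $s_v=\sigma q$ with $\sigma\ge 0$, $|y'_v|=(1-\alpha)\sigma+\alpha|\omega_v|$ is the \emph{largest} possible value of $|(1-\alpha)s_v+\alpha\omega_v|=|y_v|$ over orientations of $\omega_v$; generically $|y_v|<|y'_v|$, so $y'\in W$ does not force $y\in W$ (this is exactly the direction in which membership can fail when $W$ is non-convex --- it is why Proposition~\ref{bilds_center} needs the two-sided Cauchy--Schwarz sandwich $|a_v|\le|y_v|\le|b_v|$ rather than just the upper bound). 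The $\dot\sim$-machinery and $\cnt(W^{(q)})\dot\sim\cnt(W^{(i)})$ only control segments lying inside a single slice $\mathrm{span}\{1,q\}$, so they give $s\in\cnt(W^{(q)})$, not $s\in\cnt(W)$. Fortunately your own first step repairs this at no cost: Proposition~\ref{bilds_center} gives $c\in\cnt(W\cap\C)=\cnt(W)\cap\C\subseteq\cnt(W)$, and the similarity-closedness of $\cnt(W)$ you just proved then yields $s\in\cnt(W)$ for every $s\in[c]$. With that substitution your argument is complete and somewhat shorter than the paper's, which reaches the same conclusion by extending the sandwich argument of Proposition~\ref{bilds_center} to arbitrary slices $\mathrm{span}\{1,q\}$.
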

\begin{proof}
Let $c\in \cnt(W)$. For some $q\in \bS_{\bP}$, we have $c\in \cnt(W)\cap\mathrm{span}\,\{1,q\}$. Using a similar reasoning of the proof of proposition \ref{bilds_center}, we can show that
\[
\cnt(W)\cap\mathrm{span}\,\{1,q\}=\cnt(W\cap\mathrm{span}\,\{1,q\})=\cnt(W^{(q)}).
\]
Now, $c\in\cnt(W)$ if and only if $c\in\cnt(W^{(q)})$, for some $q\in\bS_{\bP}$, that is,
\[
c\in\cnt(W) \Leftrightarrow c\in\cnt(W^{(q)}), \text{for some}\, q\in \bS_{\bP}.
\]
By lemma \ref{lemma_wq+-convex}, $\cnt(W^{(q)})\dot\sim \cnt(W^{(i)})$.
We conclude that
 \[
  c\in[\cnt(W^{(q)})]=[\cnt(W^{(i)})]=[\cnt(W\cap\C)].
  \]\end{proof}

 If we use the fact that $W$ is the set of all elements similar to those in $W \cap \C$, that is, $W=\Big[W \cap \C\Big]$, the above result can be written in the following way:
\[
\cnt\Big(\big[W \cap \C\big]\Big)=\Big[\cnt(W \cap \C) \Big].
\]

In other words, the operations of taking the center and of taking the equivalence classes of a numerical range commute.

\section{Characterization of the center of the bild}

We now know that it is possible to characterize the center of the numerical range from the center of the bild. On the other hand, lemma \ref{center_bild_conj} guarantees that the lower part of the center of the bild is the conjugate of the upper part,
\begin{equation}\label{center+center-}
  \cnt^-=\cnt(W) \cap \C^-=(\cnt^+)^*=(\cnt(W) \cap \C^+)^*,
\end{equation}
and we conclude that to determine $\cnt(W)$ we only need to know $\cnt^+$. From corollary \ref{center_hermitian}, we may focus only on non-hermitian matrices.

By the convexity of the upper bild, the segment joining any two elements in the upper bild is contained in it. Therefore an element of the upper bild is not in the center if and only if a convex combination with an element in the lower bild is not in the bild.  That is, an element $\vc{\omega} \in W \cap \C^+$ is not in the center of the bild, $\vc{\omega} \not\in \cnt(W \cap \C)$, if and only if, there is $\vc{z} \in W \cap \C^-$ such that the segment  connecting the two is not contained in the bild, i.e. $[\vc{\omega}, \vc{z}] \not\subseteq W \cap \C$.
The argument we will use is build upon the fact that a segment, joining two elements of the bild, is not totally contained in the bild, if and only if it crosses the reals outside of it. Thus, either an element $\vc{\omega}$ of the upper bild has all its segments $[\vc{\omega}, \vc{z}]$, for $\vc{z}\in W \cap \C^-$, crossing the real line inside the bild, that is, $[\vc{\omega}, \vc{z}]\cap \R \subseteq B$, in which case $\vc{\omega}$ is in the center, or there is one of these segments that crosses the real line outside the bild, and the element $\vc{\omega}$ is not in the center.

For the rest of this section we will slightly change notation and write $z=x+iy$ as $(x,y)$. Let $m= \min W \cap \R$ and $M= \max W \cap \R$ be the minimum and maximum of the real elements in the bild. Using the previous reasoning, but on a dual perspective, to find out if an element $\vc{\omega}$ in the upper bild is in the center, we only need to see if the segments joining  $\vc{\omega}$  to $(M,0)$ and to $(m,0)$ intersects the interior of the lower bild or not. In the case where it does the element is not in the center. For instance, if the segment joining $\vc{\omega} \in B^+$  to $(m,0)$ intersects $B^-$ at $\vc{z}$ in the lower part of the interior of the bild, then there is an element $\vc{\tilde{z}}$ to the left of $ \vc{z}$ such that the segment  $[\vc{\omega}, \vc{\tilde{z}}]$ will cross the reals to the left of  $(m,0)$, and therefore outside of the bild.

The next results formalize this intuitive argument. To reach this we will need to define for each $\vc{\omega} \in \C^+$ two lines, one denoted $l_{\vc{\omega}}$ connecting $\vc{\omega}=(\omega_1,\omega_2)$ to $(m,0)$, and the other denoted $L_{\vc{\omega}}$ connecting $\vc{\omega}$ to $(M,0)$. Since the real points of the numerical range belongs to the center (see theorem \ref{theo star shape}), it is enough to consider points $\vc{\omega}=(\omega_1, \omega_2)$, with $\omega_2>0$. The lines are given by
\begin{align*}
&l_{\vc{\omega}}=\{ (x,y): x=ay+m \} \\
&L_{\vc{\omega}}=\{ (x,y): x=by+M) \},
\end{align*}
with $a=\dfrac{\omega_1-m}{\omega_2}$ and $b=\dfrac{\omega_1-M}{\omega_2}$.


Let $y_m=\min \{ \pi_{\text{Span}\{i\}}(B)\}$ and $y_M=\max \{ \pi_{\text{Span}\{i\}}(B)\}$. By symmetry of the bild, $y_M=-y_m$. Since the matrix is non-hermitian, $y_M>0$.

We may define, for $y\in[y_m,0]$, two functions:
\[
x_1(y)=\mathrm{min}\,\{x:(x,y)\in B^-\} \,\,\,\textrm{and}\,\,\,x_2(y)=\mathrm{max}\,\{x:(x,y)\in B^-\}.
\]
Notice that $x_1(0)=m$ and $x_2(0)=M$. According to \cite[theorem 5.3]{Roc}, $x_1(\cdot)$ is convex and $x_2(\cdot)$ is concave. The lower bild may be written using $x_1(\cdot)$ and $x_2(\cdot)$:

\smallskip

\begin{equation}\label{lower bild}
B^-=\big\{(x,y): y_m\leq y \leq 0 \text{ and } x_1(y) \leq x \leq x_2(y) \big\}.
\end{equation}

\medskip

The interior of the lower bild is given by:

\smallskip

\begin{equation}\label{lower part interior}
\interior{(B^-)}=\big\{(x,y): y_m< y < 0 \text{ and } x_1(y) < x < x_2(y) \big\}.
\end{equation}

\medskip

The next result gives a characterization of $\cnt(B)$, when $m<M$.
  For $\vc\omega \in B^+$  the lines $l_{\vc\omega}$ and $L_{\vc\omega}$ do not cross over the interior of the lower bild, if and only if, $\vc\omega\in \cnt(B)$.

\begin{thm} \label{does it_touch_your inner self?}
Let  $m<M$ and let $\vc{\omega} \in B^+$. Then,
$\vc{\omega} \in \cnt(B)$
if, and only if,
\[
\quad \Big(l_{\vc{\omega}}\cap \interior{(B^-)}\Big) \bigcup \Big(L_{\vc{\omega}}\cap \interior{(B^-)}\Big)= \emptyset.
\]
\end{thm}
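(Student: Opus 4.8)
The plan is to reduce the statement to a condition on where the segments from $\vc{\omega}$ to points of $B^-$ cross the real axis, and then to verify that condition in each direction.

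First I would fix notation and dispose of trivialities. By Theorem~\ref{theo star shape} real points of $B^+$ already belong to $\cnt(B)$, so — consistently with the way $l_{\vc{\omega}}$ and $L_{\vc{\omega}}$ were introduced — I assume $\vc{\omega}=(\omega_1,\omega_2)$ with $\omega_2>0$. For $\vc{z}=(z_1,z_2)$ with $z_2\le 0$ set
\[
t(\vc{z})=\frac{\omega_2 z_1-z_2\,\omega_1}{\omega_2-z_2},
\]
which is continuous on $\{z_2\le 0\}$, satisfies $t(m,0)=m$ and $t(M,0)=M$, and, when $z_2<0$, is the abscissa of the unique point where $[\vc{\omega},\vc{z}]$ meets the real axis. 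A direct computation yields
\[
t(\vc{z})=m\iff z_1=az_2+m\iff \vc{z}\in l_{\vc{\omega}},\qquad t(\vc{z})=M\iff z_1=bz_2+M\iff \vc{z}\in L_{\vc{\omega}}.
\]
Since $B^+$ is convex, $W\cap\R=B^+\cap\R$ is an interval with endpoints $m$ and $M$, hence $W\cap\R=[m,M]$. Therefore, for $\vc{z}\in B^-$ with $z_2<0$, splitting $[\vc{\omega},\vc{z}]$ at $(t(\vc{z}),0)$ and using convexity of $B^+$ and of $B^-$ shows $[\vc{\omega},\vc{z}]\subseteq B\iff t(\vc{z})\in[m,M]$, while for $\vc{z}\in B^+$ the inclusion $[\vc{\omega},\vc{z}]\subseteq B^+$ is automatic. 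So it is enough to prove: $t(\vc{z})\in[m,M]$ for every $\vc{z}\in B^-$ if and only if $l_{\vc{\omega}}$ and $L_{\vc{\omega}}$ both miss $\interior{(B^-)}$.

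For the direction ``both lines miss $\interior{(B^-)}$ $\Rightarrow$ $\vc{\omega}\in\cnt(B)$'', I would argue by contradiction. Suppose some $\vc{z}\in B^-$ has $t(\vc{z})<m$ (the case $t(\vc{z})>M$ is symmetric, using $L_{\vc{\omega}}$ and $(m,0)$ in place of $l_{\vc{\omega}}$ and $(M,0)$). Since $A$ is non-hermitian we have $y_m<0$, and since $m<M$ the set $\interior{(B^-)}$ is nonempty, convex, and dense in $B^-$. By continuity of $t$ and $t(M,0)=M>m$, the open set $\{t>m\}$ meets $\interior{(B^-)}$; and $\{t<m\}$ is open and contains $\vc{z}\in B^-\subseteq\overline{\interior{(B^-)}}$, so it too meets $\interior{(B^-)}$. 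Hence the continuous function $t$ takes values on both sides of $m$ on the connected set $\interior{(B^-)}$, so it equals $m$ at some $\vc{p}\in\interior{(B^-)}$; by the equivalence above $\vc{p}\in l_{\vc{\omega}}\cap\interior{(B^-)}$, a contradiction. Thus $t(\vc{z})\in[m,M]$ for all $\vc{z}\in B^-$, and therefore $[\vc{\omega},\vc{z}]\subseteq B$ for all $\vc{z}\in B$, i.e.\ $\vc{\omega}\in\cnt(B)$. For the contrapositive of the converse, suppose $\vc{p}\in l_{\vc{\omega}}\cap\interior{(B^-)}$ (the case $\vc{p}\in L_{\vc{\omega}}\cap\interior{(B^-)}$ is symmetric). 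As $\interior{(B^-)}$ is open, for small $\varepsilon>0$ the point $\vc{z}=\vc{p}-(\varepsilon,0)$ still lies in $\interior{(B^-)}\subseteq B$, and the formula for $t$ gives $t(\vc{z})=t(\vc{p})-\frac{\varepsilon\omega_2}{\omega_2-p_2}=m-\frac{\varepsilon\omega_2}{\omega_2-p_2}<m$, since $p_2<0$. Then $(t(\vc{z}),0)\in[\vc{\omega},\vc{z}]$ but $(t(\vc{z}),0)\notin[m,M]=W\cap\R$, so $[\vc{\omega},\vc{z}]\not\subseteq B$ although $\vc{z}\in B$; hence $\vc{\omega}\notin\cnt(B)$. (In the $L_{\vc{\omega}}$ case one pushes $\vc{p}$ to the right and gets $t(\vc{z})>M$.)

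The routine computations — the formula for $t(\vc{z})$ with its two special values, the identity $W\cap\R=[m,M]$, and $\interior{(B^-)}\neq\emptyset$ — I would treat briefly. The one genuinely delicate point is the first implication: the naive move of intersecting the segment $[\vc{z},(M,0)]\subseteq B^-$ with $l_{\vc{\omega}}$ may produce a point of $\partial B^-$ rather than of $\interior{(B^-)}$, which is useless. The remedy is to run the intermediate-value argument entirely inside $\interior{(B^-)}$, which is legitimate precisely because $m<M$ (hence $\interior{(B^-)}\neq\emptyset$) and $\interior{(B^-)}$ is dense in $B^-$.
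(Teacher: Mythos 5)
Your proposal is correct, and in the harder direction it takes a genuinely different route from the paper. The reduction to ``the segment $[\vc{\omega},\vc{z}]$ leaves $B$ iff it crosses $\R$ outside $[m,M]=B\cap\R$'' is the same in both arguments, as is the easy direction (given $\vc{p}\in l_{\vc{\omega}}\cap\interior{(B^-)}$, perturb leftward inside the open set $\interior{(B^-)}$ to push the crossing point below $m$ --- the paper does exactly this with its auxiliary lines $f,g,h$). The difference is in proving that $\vc{\omega}\notin\cnt(B)$ forces one of the lines to meet $\interior{(B^-)}$: the paper explicitly intersects the segment $[\vc{z},(M,0)]$ with $l_{\vc{\omega}}$ after placing $\vc{z}$ and $(M,0)$ in opposite half-planes, and then verifies by hand, using the convexity of $x_1(\cdot)$ and the concavity of $x_2(\cdot)$, that the intersection point lies in $\interior{(B^-)}$; you instead run an intermediate-value argument for your crossing function $t$ on the connected open set $\interior{(B^-)}$, using that $\interior{(B^-)}$ is nonempty (it contains the interior of the triangle spanned by $(m,0)$, $(M,0)$ and a point at height $y_m<0$) and dense in $B^-$. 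Your version is shorter and avoids the convexity computation entirely, at the cost of invoking the standard fact that a convex set with nonempty interior is contained in the closure of its interior (Rockafellar, Theorem 6.3) together with the identification of the set \eqref{lower part interior} with the topological interior of $B^-$; both of these you should state explicitly, since the paper's formula \eqref{lower part interior} is given as a definition rather than proved to be the topological interior, but they are routine for a planar convex set described by a convex lower and concave upper boundary. The paper's approach, by contrast, works directly with the quantities $x_1,x_2$ it will reuse in Proposition \ref{prop retas}, which is why it prefers the explicit computation.
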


\begin{proof}
We begin by observing the following. Let $\vc{\omega}=(\omega_1,\omega_2)\in B^+$ with $\omega_2> 0$. The line $l_{\vc{\omega}}$ passing through $\vc{\omega}$ and $(m,0)$ can be written as:
\[
l_{\vc{\omega}}(y)=\frac{\omega_1-m}{\omega_2}(y-\omega_2)+\omega_1
\]
and define two half planes:
\[
\wp^-: x-l_{\vc{\omega}}(y)<0 \quad \text{and} \quad \wp^+: x-l_{\vc{\omega}}(y)>0.
\]

To prove that if $\vc{\omega} \in \cnt(B)$, then
$(l_{\vc{\omega}}\cup L_{\vc{\omega}})\cap \interior{(B^-)}= \emptyset$
we proceed by contrapositive.

Fix an element $\vc{\omega}=(\omega_1, \omega_2)\in B^+$ as before, i.e., with $\omega_2\neq 0$, and suppose there is an element $\vc{z}\in l_{\vc{\omega}}\cap \interior{(B^-)}$ (if $\vc{z}\in L_{\vc{\omega}}\cap \interior{(B^-)}$ the proof is analogous).
Since $\vc{z}=(z_1, z_2)\in l_{\vc{\omega}}$, the line $l_{\vc{\omega}}$ may also be written as
\[
f(y)=\frac{\omega_1-z_1}{\omega_2-z_2}(y-\omega_2)+\omega_1.
\]

Let $N_{\varepsilon}(\vc{z})\subset \interior{(B^-)}$ be a neighborhood of $\vc{z}$. Then, there is $\vc{\tilde{z}}=({\tilde{z}}_1, {\tilde{z}}_2)\in N_{\varepsilon}(\vc{z})$ such that $\tilde{z_1}<z_1$ and $\tilde{z_2}=z_2$.

The line $\tilde{l}$ passing through $\vc{\omega}$ and $\vc{\tilde{z}}$ is
\[
g(y)=\frac{\omega_1-\tilde{z}_1}{\omega_2-z_2}(y-\omega_2)+\omega_1.
\]

Define the affine function $h(.)$ by:
\[
h(y)\equiv g(y)- f(y)=\frac{z_1-\tilde{z}_1}{\omega_2-z_2}(y-\omega_2).
\]

Clearly, $h(\omega_2)=0$ and $h(z_2)=\tilde{z}_1-z_1<0$. Since $\omega_2>0$ and $z_2<0$, there is $\beta \in (0,1)$ such that $0=\beta\omega_2+(1-\beta)z_2$. Moreover, since $h$ is affine,
\begin{eqnarray*}
  h(0) &=& \beta h(\omega_2)+(1-\beta)h(z_2) \\
   &=& (1-\beta)h(z_2)<0
\end{eqnarray*}
and so, $g(0)<f(0)=l_{\vc{\omega}}(0)=m$. Hence, the line passing through $\vc{\omega}$ and $\tilde{\vc{z}}$ does not intersect $B\cap\R$,  which implies that $[\vc{w}, \tilde{\vc{z}}]\nsubseteq B$ and $\vc{\omega}\notin \cnt(B)$.

Now we prove the converse, that is, for $\vc{\omega}\in B^+$ if $\vc{\omega}\notin \cnt(B)$ then $(l_{\vc{\omega}}\cup L_{\vc{\omega}})\cap \interior{(B^-)}\neq \emptyset.$ Since $\vc{\omega}\notin \cnt(B)$, there is a point $\vc{z}=(z_1, z_2)\in {B}^-$ such that the line segment $[\vc{\omega}, \vc{z}]$ is not contained in the bild.

Assume that the line containing $[\vc{\omega}, \vc{z}]$, call it $x=g(y)$, intersects the real line ($y=0$) at $(\nu, 0)$. Since $[\vc{\omega},\vc{z}]\nsubseteq B$ then $\nu\notin [m, M]$. Otherwise, if $(\nu,0)\in B$, convexity of the upper bild implies that $[\vc{\omega},(\nu,0)]\subseteq B^+$ and convexity of the lower bild implies that $[(\nu,0),\vc{z}]\subseteq B^-$. Thus,
\[
[\vc{\omega},\vc{z}]=[\vc{\omega},(\nu,0)]\cup[(\nu,0),\vc{z}]\subseteq B,
\]
which contradicts our hypothesis. We will assume that $\nu<m$ (when $\nu>M$ the proof is analogous). We claim that we can take $\vc{z}=(z_1,z_2)\in \interior{(B^-)}$. In fact, if $(z_1,z_2)$ is on the boundary of $B$, take $\vc{z}_{\vc{\epsilon}}=(z_1+\epsilon_1,z_2+\epsilon_2)$, with $\vc{\epsilon}=(\epsilon_1,\epsilon_2)$ small enough such that $\vc{z}_{\vc{\epsilon}}\in\interior{(B^-)}$ and $[\vc{\omega}, \vc{z}_{\vc{\epsilon}}]\cap \R =\nu'$, close enough to $\nu$ in order to satisfy $\nu'<m$. In this way, there is a point $\vc{z}_{\vc{\epsilon}}\in\interior{(B^-)}$ such that $[\vc{\omega}, \vc{z}_{\vc{\epsilon}}]\nsubseteq B$.

Since $\nu <l_{\vc{\omega}}(0)=m<M$ then $(\nu, 0)$ and $(M, 0)$  must be in different half-planes, that is, $(\nu, 0)\in \wp^-$ and $(M, 0)\in \wp^+$.

We now show that $\vc{z}$ and $(M, 0)$ are in different half-planes using the same reasoning of the first part of the proof, but now with $g$ being the line that passes through the points $\vc{\omega}$, $(\nu,0)$ and $\vc{z}$, and $f$ being the line that contains $[\vc{\omega},(m,0)]$. It follows that $h(0)=g(0)-f(0)=\nu-m<0$. Since $\omega_2>0$ and $z_2<0$, there is $\beta \in (0,1)$ such that $0=\beta\omega_2+(1-\beta)z_2$. Hence, $h(0)=(1-\beta)h(z_2)<0$ and $h(z_2)<0$. It follows that $z_1<l_{\vc{\omega}}(z_2)$ and therefore $\vc{z}\in \wp^-.$

Let $\gamma(x,y)=x-l_{\vc{\omega}}(y)$. Since $\gamma(\vc{z})<0$ and $\gamma(M,0)>0$, then the line that joins $\vc{z}$ to $(M,0)$, by continuity of $\gamma$, passes through a point $\vc{z}'$ with $\gamma(\vc{z}')=0$, that is, $[\vc{z}, (M, 0)]\cap l_{\vc{\omega}}=\vc{z}'$. Taking into account $\vc{z}'\in l_{\vc{\omega}}$, it only remains to prove that $\vc{z}'\in \interior{(B^-)}$. Since $\vc{z}=(z_1,z_2), (M,0)\in B^-$, by convexity of $B^-$ we have
\begin{equation*}\label{z' convex comb}
\vc{z'}=(z'_1,z'_2)=(1-\alpha)(z_1,z_2)+\alpha (M,0)\,,
\end{equation*}
for some $\alpha\in(0,1)$. Note that if $\alpha=0$, $\vc{z}'=\vc{z}$ and if $\alpha=1$, $\vc{z}'=(M,0)$, that cannot happen because $\vc{z}'\in l_{\vc{\omega}}$ and $\vc{z}, (M,0)\notin l_{\vc{\omega}}$. We know that $\vc{z}\in \interior{(B^-)}$, and so,
\begin{equation}\label{x_1<z_1<x_2}
x_1(z_2)<z_1<x_2(z_2).
\end{equation}

From (\ref{x_1<z_1<x_2}) and since $M> x_1(0)=m$, we have:
\begin{eqnarray*}
  z'_1 &=& (1-\alpha) z_1+ \alpha M> (1-\alpha) x_1(z_2)+\alpha x_1(0) \\
   &\geq& x_1((1-\alpha)z_2+\alpha 0)\,\,\,\,\,\,\,\,\,\,\,\,\,\,\,\textrm{(by convexity of }\,x_1(\cdot)\,\textrm{)}\\
   & = & x_1(z'_2).
\end{eqnarray*}

With a similar reasoning and using that $\alpha\in (0,1)$ we see that $z'_1<x_2(z'_2)$. It follows that
\[
x_1(z'_2)<z'_1<x_2(z'_2).
\]
Now we need to check that $y_m<z'_2<0$. Since $\vc{z}\in \interior{(B^-)}$ we know that $y_m<z_2<0$, and so,
\[
y_m<(1-\alpha) y_m<(1-\alpha) z_2=z'_2<0.
\]
We conclude that $\vc{z}'\in\interior{(B^-)}$.
\end{proof}

Relying on our previous results, we will now prove the existence of two lines containing $(m,0)$ and $(M,0)$ that define the upper boundary of the center. Such lines are denoted respectively by $l$ and $L$.

Any concave function has lateral derivatives \cite[theorem 23.1]{Roc}, therefore let $a=x'_1(0^-)$ and $b=x'_2(0^-)$, the left derivative at $0$ of $x_1(\cdot)$ and $x_2(\cdot)$, respectively.

Let the left tangent line to $x_1$ and $x_2$ at $0$ be given by the sets
\begin{equation}\label{tangent lines}
 \{(x,y): x=l(y)=ay+m\}\quad \text{and}\quad
\{(x,y): x=L(y)=by+M\},
\end{equation}
respectively. Since $x_1(\cdot)$ is convex and $x_2(\cdot)$ is concave we have, \cite[theorem 25.1]{Roc}, $l(y) \leq x_1(y)$ and $x_2(y) \leq L(y)$, for every $y\in [y_m, 0]$.

\begin{prop} \label{prop retas}
 Let $m<M$ and let $\vc{\omega}=(\omega_1,\omega_2) \in B^+$. Then,
 \begin{enumerate}[(i)]
   \item $l(\omega_2)\leq \omega_1$ if, and only if, $l_{\vc{\omega}}\cap \interior{(B^-)}=\emptyset$,
   \item $\omega_1\leq L(\omega_2)$ if, and only if, $L_{\vc{\omega}}\cap \interior{(B^-)}=\emptyset$.
 \end{enumerate}
\end{prop}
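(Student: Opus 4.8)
The plan is to compare, at each height $y\in(y_m,0)$, the $x$-coordinate of the line $l_{\vc{\omega}}$ (resp.\ $L_{\vc{\omega}}$) with the functions $x_1(\cdot)$, $x_2(\cdot)$ bounding the lower bild in \eqref{lower bild}--\eqref{lower part interior}. We may and do assume $\omega_2>0$. Parametrising $l_{\vc{\omega}}$ as $x=l_{\vc{\omega}}(y):=\tfrac{\omega_1-m}{\omega_2}\,y+m$ and recalling from \eqref{tangent lines} that $l(y)=ay+m$ with $a=x_1'(0^-)$, dividing once by $\omega_2>0$ and once by $y<0$ (the latter reversing the inequality) yields the chain of equivalences
\[
l(\omega_2)\le\omega_1 \iff a\le\tfrac{\omega_1-m}{\omega_2} \iff l_{\vc{\omega}}(y)\le l(y)\ \text{for all }y<0,
\]
and, symmetrically, with $L_{\vc{\omega}}(y):=\tfrac{\omega_1-M}{\omega_2}\,y+M$,
\[
\omega_1\le L(\omega_2)\iff L_{\vc{\omega}}(y)\ge L(y)\ \text{for all }y<0.
\]

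For the forward implication of (i): if $l(\omega_2)\le\omega_1$ then $l_{\vc{\omega}}(y)\le l(y)\le x_1(y)$ for every $y\in(y_m,0)$, the second inequality being the tangent estimate recorded right after \eqref{tangent lines}; by \eqref{lower part interior} the point $(l_{\vc{\omega}}(y),y)$ violates $x_1(y)<x$, hence lies outside $\interior{(B^-)}$, and since $\interior{(B^-)}\subseteq\{y_m<y<0\}$ this gives $l_{\vc{\omega}}\cap\interior{(B^-)}=\emptyset$. The forward implication of (ii) is the mirror image, using $x_2(y)\le L(y)\le L_{\vc{\omega}}(y)$ and that a point of $\interior{(B^-)}$ must satisfy $x<x_2(y)$.

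For the converse of (i), assume $l(\omega_2)>\omega_1$, i.e.\ $c:=a-\tfrac{\omega_1-m}{\omega_2}>0$. Since $a=x_1'(0^-)=\lim_{y\to0^-}\tfrac{x_1(y)-m}{y}$, we have $x_1(y)-l(y)=o(|y|)$ as $y\to0^-$, while $l_{\vc{\omega}}(y)-l(y)=c\,|y|$; hence there is $\delta_1>0$ with $x_1(y)<l_{\vc{\omega}}(y)$ for all $y\in(-\delta_1,0)$. Also $l_{\vc{\omega}}(y)\to m$ as $y\to0^-$, whereas $x_2$, being concave on $[y_m,0]$, satisfies $\liminf_{y\to0^-}x_2(y)\ge x_2(0)=M>m$, so there is $\delta_2>0$ with $l_{\vc{\omega}}(y)<x_2(y)$ on $(-\delta_2,0)$. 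Any $y\in(-\min\{\delta_1,\delta_2,|y_m|\},0)$ then satisfies $x_1(y)<l_{\vc{\omega}}(y)<x_2(y)$, so $(l_{\vc{\omega}}(y),y)\in\interior{(B^-)}$ by \eqref{lower part interior}, proving $l_{\vc{\omega}}\cap\interior{(B^-)}\ne\emptyset$. The converse of (ii) is symmetric: from $\omega_1>L(\omega_2)$ and $b=x_2'(0^-)=\lim_{y\to0^-}\tfrac{x_2(y)-M}{y}$ (so $x_2(y)-L(y)=o(|y|)$) one gets $L_{\vc{\omega}}(y)<x_2(y)$ near $0^-$, while $L_{\vc{\omega}}(y)\to M>m\ge\limsup_{y\to0^-}x_1(y)$ (by convexity of $x_1$) gives $x_1(y)<L_{\vc{\omega}}(y)$ near $0^-$.

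The sign-chasing in the first display and the two forward implications are routine. The delicate point is the converse: there the estimates $l(y)\le x_1(y)$ and $x_2(y)\le L(y)$ point the wrong way, so one must use that the left lateral derivatives $a,b$ are genuine limits of difference quotients — equivalently, that $x_1$ and $x_2$ depart from their left tangents $l$ and $L$ only at order $o(|y|)$ near $y=0^-$, which is dominated by the linear gap $l_{\vc{\omega}}(y)-l(y)$ (resp.\ $L(y)-L_{\vc{\omega}}(y)$) — together with the elementary one-sided (semi)continuity of convex/concave functions at the endpoint $0$, which keeps the crossing strictly inside the slab $x_1(y)<x<x_2(y)$.
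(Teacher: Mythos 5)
Your proof is correct and follows essentially the same route as the paper's: the forward direction via the slope comparison $\tilde a\ge a$ and the tangent estimate $l(y)\le x_1(y)$ (resp.\ $x_2(y)\le L(y)$), and the converse via the first-order expansion $x_1(y)=l(y)+o(|y|)$ at $0^-$ dominated by the linear gap, together with $m<M$ to stay below $x_2$. Your use of one-sided semicontinuity of the convex/concave bounding functions in place of the paper's appeal to continuity is a minor (and slightly more careful) variation, not a different method.
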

\begin{proof}
We will prove (i). A similar reasoning proves (ii). Let $\vc{\omega}\in B^+$ with $\omega_2>0$ and $l_{\vc{\omega}}$ be the line passing through $\vc{\omega}$ and $(m,0)$. We can write $l_{\vc{\omega}}(y)=\tilde{a}y+m$, with $\tilde{a}=\dfrac{\omega_1-m}{\omega_2}$.

Now we will prove that if $\omega_1\geq l(\omega_2)$ the line $l_{\vc{\omega}}$ does not intersect $\interior{(B^-)}$.
Since $\omega_1=l_{\vc{\omega}}(\omega_2)\geq l(\omega_2)$, it is clear that $\tilde{a}\omega_2+m \geq a\omega_2+m$, i.e., $(\tilde{a}-a)\omega_2\geq 0$. Since $\omega_2> 0$ we have $\tilde{a} \geq a$. For $y\geq 0$, $(l_{\vc{\omega}}(y), y)\in \C^+$ and so
\[
(l_{\vc{\omega}}(y), y)\notin \C^- \supseteq \interior{(B^-)}.
\]
For $y<0$ we have $l_{\vc{\omega}}(y)\leq l(y)$, since $\tilde{a}\geq a$. From the convexity of $x_1(\cdot)$ and using \cite[theorem 25.1]{Roc} we have
$x_1(y)\geq l(y)$ for any $y\in [y_m,0]$. Then,
$
l_{\vc{\omega}}(y)\leq x_1(y).
$
Therefore, $(l_{\vc{\omega}}(y), y)\in l_{\vc{\omega}}$ with $l_{\vc{\omega}}(y)\leq x_1(y)$ and from (\ref{lower part interior}) we see that $(l_{\vc{\omega}}(y), y)\notin \interior{(B^-)}$.

To prove the converse, we want to show that if $l(\omega_2)>\omega_1$ then $l_{\vc{\omega}} \cap \interior{(B^-)}\neq \emptyset$, that is, the line $l_{\vc{\omega}}\supseteq [(\omega_1,\omega_2), (m,0)]=[\vc{\omega}, (m,0)]$ intersects $\interior{(B^-)}$. Again, we have $\omega_1=l_{\vc{\omega}}(\omega_2)< l(\omega_2)$, and therefore $(\tilde{a}-a)\omega_2< 0$. Since $\omega_2> 0$, necessarily $\tilde{a} < a$. Define, for $y\in [y_m, 0]$,
\[
h(y)= l_{\vc{\omega}}(y)-x_1(y).
\]
By the first order Taylor's approximation of $x_1(\cdot)$, for small $\epsilon>0$ we get
\begin{eqnarray*}
h(-\epsilon)   &=& l'_{\vc{\omega}}(0^-)(-\epsilon)-x_1'(0^-)(-\epsilon)+ o(-\epsilon)\\
   &=&  \tilde{a} (-\epsilon) -a (-\epsilon) + o(-\epsilon) \\
   &=&  -\epsilon\Big(\tilde a - a + \frac{o(-\epsilon)}{-\epsilon}\Big).
\end{eqnarray*}
Since $\tilde{a}<a$, it follows that $h(-\epsilon)>0$, for small $\epsilon >0$. In other words, $l_{\vc{\omega}}(-\epsilon)>x_1(-\epsilon)$ for $\epsilon$ small enough. Taking into account that $l_{\vc{\omega}}(0)=m<x_2(0)=M$ and that $l_{\vc{\omega}}$ and $x_2(\cdot)$ are continuous  \cite[corollary 10.1.1]{Roc}, for $\epsilon$ small enough we have
$l_{\vc{\omega}}(-\epsilon)< x_2(-\epsilon)$. Therefore, we can choose an $\epsilon>0$ such that $x_1(-\epsilon)<l_{\vc{\omega}}(-\epsilon)<x_2(-\epsilon)$ and $y_m<-\epsilon <0$. Then $(l_{\vc{\omega}}(-\epsilon),-\epsilon)\in\interior{(B^-)}$. \end{proof}

We can now present a general way to determine the center. Let $\pi_m \equiv\mathrm{min}\,\pi_{\R}(W)$ and $\pi_M \equiv\mathrm{max}\,\pi_{\R}(W)$.

\begin{thm}\label{center bild}
 Let $\vc{\omega}=(\omega_1,\omega_2) \in B^+$. Then, $l(\omega_2)\leq \omega_1\leq L(\omega_2)$ if, and only if, $\vc{\omega}\in \cnt(B)$.
\end{thm}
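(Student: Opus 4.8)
The heavy lifting is already done: Theorem~\ref{does it_touch_your inner self?} characterizes membership in $\cnt(B)$ by the two lines $l_{\vc{\omega}},L_{\vc{\omega}}$ not entering $\interior{(B^-)}$, and Proposition~\ref{prop retas} converts each of those non-intersection conditions into an inequality involving the fixed tangent lines $l,L$. So the main case is immediate: for $\vc{\omega}=(\omega_1,\omega_2)\in B^+$ with $\omega_2>0$ and $m<M$, Theorem~\ref{does it_touch_your inner self?} gives $\vc{\omega}\in\cnt(B)$ if and only if $l_{\vc{\omega}}\cap\interior{(B^-)}=\emptyset$ and $L_{\vc{\omega}}\cap\interior{(B^-)}=\emptyset$, and Proposition~\ref{prop retas}(i),(ii) rewrites these respectively as $l(\omega_2)\le\omega_1$ and $\omega_1\le L(\omega_2)$; conjoining gives the statement. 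The sub-case $\omega_2=0$ is trivial: then $\vc{\omega}\in W\cap\R\subseteq\cnt(W)$ by Theorem~\ref{theo star shape}, so $\vc{\omega}\in\cnt(B)$ by Proposition~\ref{bilds_center}, and $l(0)=m\le\omega_1\le M=L(0)$ since $\vc{\omega}\in W\cap\R$, so both sides hold.

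The remaining case is $\omega_2>0$ with $m=M$ (possible for non-hermitian $A$, e.g.\ skew-hermitian ones). Here $W\cap\R=\{m\}$ and $x_1(0)=x_2(0)=m$; from $x_1\le x_2$ on $[y_m,0]$ with equality at $0$, dividing $x_1(y)-m\le x_2(y)-m$ by $y<0$ and letting $y\to0^-$ yields $a=x_1'(0^-)\ge x_2'(0^-)=b$. I would argue the equivalence by hand. Forward: if $\vc{\omega}\in\cnt(B)$, pick any $y\in[y_m,0)$ (possible since $y_m<0$ for non-hermitian $A$); the points $(x_1(y),y)$ and $(x_2(y),y)$ lie in $B^-$ by (\ref{lower bild}), so their segments to $\vc{\omega}$ stay in $B$ and hence meet $\R$ at a point of $B\cap\R=\{m\}$; thus $(x_1(y),y)$ and $(x_2(y),y)$ both lie on the line through $(m,0)$ and $\vc{\omega}$, forcing $x_1\equiv x_2\equiv \tilde a\,y+m$ on $[y_m,0]$ with $\tilde a=(\omega_1-m)/\omega_2$, whence $a=b=\tilde a$ and $l(\omega_2)=L(\omega_2)=\omega_1$, so $l(\omega_2)\le\omega_1\le L(\omega_2)$. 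Backward: if $l(\omega_2)\le\omega_1\le L(\omega_2)$, then $\omega_2>0$ and $a\ge b$ force $l(\omega_2)=a\omega_2+m\ge b\omega_2+m=L(\omega_2)$, hence $a=b$ and $\omega_1=a\omega_2+m$; then $l\equiv L$, so from $l\le x_1\le x_2\le L$ we get $x_1\equiv x_2\equiv ay+m$, meaning $B^-$ is the segment of the line $x=ay+m$ over $[y_m,0]$, a line passing through both $(m,0)$ and $\vc{\omega}$. Any segment from $\vc{\omega}$ to a point of $B^-$ then lies on that line: its part with $y\le0$ is in $B^-$, and its part with $y\ge0$ joins $(m,0)$ to $\vc{\omega}$ inside the convex set $B^+$; segments from $\vc{\omega}$ to points of $B^+$ are in $B^+$ by convexity. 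Hence $\vc{\omega}\in\cnt(B)$.

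I do not foresee a serious obstacle: all the geometry lives in Theorem~\ref{does it_touch_your inner self?} and Proposition~\ref{prop retas}, so for $m<M$ the proof is a one-line chaining of equivalences, and the only genuinely new point is the degenerate case $m=M$, where the key observation is that being in the centre collapses $B^-$ (and, via the conjugation symmetry of Lemma~\ref{center_bild_conj}, also $B^+$) onto a single line through $(m,0)$.
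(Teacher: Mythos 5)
Your proof is correct, and for the generic case $m<M$ it coincides with the paper's: both reduce the statement to a one-line conjunction of Theorem~\ref{does it_touch_your inner self?} with Proposition~\ref{prop retas}, after disposing of $\omega_2=0$ via Theorem~\ref{theo star shape} and Proposition~\ref{bilds_center}. Where you genuinely diverge is the degenerate case $m=M$. The paper proceeds by computing both sides of the asserted equality separately: it first identifies $\cnt(B)$ by hand (equal to $B$ when the bild is the vertical segment $\{m\}\times[y_m,y_M]$, and equal to $\{(m,0)\}$ when $\pi_m<\pi_M$, the latter shown by exhibiting explicit convex combinations that leave $B$), and then runs a sub-case analysis on the slopes ($a>b$, $a=b\neq 0$, $a=b=0$) to check that $\{(x,y)\in B^+: l(y)\le x\le L(y)\}$ matches. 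You instead prove the equivalence directly: the key observation that $B\cap\R=\{m\}$ forces every segment from a center point $\vc{\omega}$ (with $\omega_2>0$) to a point of $B^-$ to pass through $(m,0)$, which collapses $x_1\equiv x_2$ onto the single line through $(m,0)$ and $\vc{\omega}$ and yields $l(\omega_2)=\omega_1=L(\omega_2)$; conversely, $a\ge b$ (from $x_1\le x_2$ with equality at $0$) plus the sandwich $l\le x_1\le x_2\le L$ forces $B^-$ to be that same line, from which membership in the center follows by splitting each segment at $(m,0)$. Your route is more economical and unified, avoiding the three-way slope case split entirely (when $a>b$ your hypothesis is simply vacuous for $\omega_2>0$); the paper's route has the side benefit of stating explicitly what $\cnt(B)$ is in each degenerate configuration, which is the more informative description even if it costs more bookkeeping. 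No gaps: your uses of $y_m<0$, of $B\cap\R=\{m\}$, and of the tangent-line inequalities from \cite[theorem 25.1]{Roc} are all legitimate under the standing assumption (made just before this section of the paper) that $A$ is non-hermitian.
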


\begin{proof}
When $m<M$, proposition \ref{prop retas}  and  theorem \ref{does it_touch_your inner self?} prove the stated equivalence. For the case $m=M$ we will first find out the $\cnt(B)$ and then prove the equality with the set $\{(\omega_1,\omega_2) \in B^+: l(\omega_2) \leq \omega_1 \leq L(\omega_2)\}$.

When $m=M$ and the bild is a vertical segment $B = \{m\}\times [y_m,y_M]$ then, clearly, $\cnt(B)=B$ and, in this case, $\cnt^+(B)=B^+=\{m\}\times [0,y_M]$ . If  $m=M$ but the bild is not a vertical line ($\pi_m<\pi_M$) then we claim the center is  $\cnt=\{(m,0)\}$. To see this, first consider that $\vc{z}=(z_1,z_2) \in B$ with $z_1\neq m$. Then ${\vc{z}}^*=(z_1,-z_2) \in B$ and $\tfrac{1}{2}\vc{z}+\tfrac{1}{2}{\vc{z}}^*=(z_1,0) \notin B$. Therefore $\vc{z} \notin \cnt(B)$. It remains to consider the case where $\vc{\omega}=(m,y) \in B$, for some $y\neq 0$. There is $(z_1,z_2), (z_1,-z_2) \in B$ with $z_1 \neq m$ and $z_2 \neq 0$. Assume, without loss of generality that $z_2$ has opposite sign of $y$. Then there is a $\beta \in (0,1)$, such that $ \beta y+(1-\beta) z_2=0$. Clearly, $m \neq \beta m+(1-\beta)z_1 \not \in B \cap \R$, thus
 \[
 \beta(m,y)+(1-\beta)(z_1,z_2)=(\beta m+(1-\beta)z_1 ,0) \neq (m,0)= B\cap\R.
 \]
We concluded that $  \beta(m,y)+(1-\beta)(z_1,z_2) \not\in B$ and therefore that $(m,y) \notin \cnt(B)$, for $y\neq 0$.

In the case where $B = \{m\}\times [y_m,y_M]$, $x_1(y)=m=x_2(y)$ for $y\in [y_m,0]$ and $x_1'(0^-)=x_2'(0^-)=0$, thus $l(y)=L(y)=m$ for any $y \in \R$. Then
\[
\{(x,y) \in B^+: l(y) \leq x \leq L(y)\}=\{(x,y) \in B^+:  x =m\}  =\{m\}\times [0,y_M].
\]
When $m=M$ and $\pi_m<\pi_M$, we know that $x_1(\cdot)\leq x_2(\cdot)$ and $x_1(0)=x_2(0)=m$. Then $a\equiv x_1'(0^-)\geq  x_2'(0^-)\equiv b$. In the case where $a>b$ we have
\[
l(y)=m+ay> m+by=L(y), \text{ for } y > 0.
\]
 Therefore, $\{(x,y) \in B^+: l(y) \leq x\leq L(y)\}=(m,0)$ and this is, in fact, the upper center of $B$.

We now consider $a=b \neq 0$ (the case where $a=b=0$ is the one where $B=\{m\}\times [y_m,y_M]$). Since $x_1(\cdot)$ is convex and $x_2(\cdot)$ is concave we know that, using again \cite[theorem 25.1]{Roc}, $l(y)\leq x_1(y)\leq x_2(y)\leq L(y)$. As a consequence of $a=b$ we have that $l=L$ and thus
 \[
 l(y)= x_1(y)= x_2(y),
 \]
that is, the lower bild is a line, and we can write it as the set
\begin{align*}
B^-&=\{(x,y) \in \R^2: x=l(y), y_m\leq y \leq 0 \}\\
&=\{(x,y) \in \R^2: x=m+ay, y_m\leq y \leq 0 \}.
\end{align*}
Since the upper bild is the conjugate of the lower bild,
\[
B^+=\{(x,y) \in \R^2: x=m-ay, 0\leq y \leq y_M \}.
\]
 Then the intersection of $B^+$ and $l=\{(x,y)\in \R^2:  x=m+ay, y \in \R \}$, when $a\neq 0$ is just $(m,0)$. That is
\[
\{(x,y) \in B^+: x=l(y)\}=(m,0)=\cnt^+(B).
\] \end{proof}

A simple observation on the slope of the lines $l$ and $L$ allows us to give a different proof of the known result of Au-Yeung (see,  \cite[theorem 3]{Ye1}), which establishes an equivalent condition for the convexity of the quaternionic numerical range.

It is well known \cite[theorem 23.1]{Roc} that for any convex function $f$ of real variable and any fixed element $y_1$ in the domain of $f$ the function defined by
\[
y \mapsto \frac{f(y_1)-f(y)}{y_1-y}
\]
is increasing with $y$. Then any line that joins $(f(y),y)$ and $(f(y_1),y_1)$  in the graph of $f$ with $y<y_1$ has slope smaller than $f'(y_1^-)$. Notice now that there is an element $(\pi_m,  y_{\pi_m})$ in the lower bild. Using the previous conclusion when the convex function is $x_1$, the reference point is $y_1=0$ and $x_1(y_{\pi_m})=\pi_m<x_1(0)=m$, we conclude that
\begin{equation}\label{slope_l_1}
 a=x_1'(0^-)\geq \frac{x_1(0)-x_1(y_{\pi_m})}{0-y_{\pi_m}}>0,
\end{equation}
\cite[theorem 23.1]{Roc}, that is, $l$ has positive slope.
For the case when $\pi_m=m$ we have
\begin{eqnarray}
 a=x'_1(0^-) &=& \lim_{\epsilon \rightarrow 0^-} \frac{x_1(0)-x_1(\epsilon)}{0-\epsilon}\nonumber \\
   &=& \lim_{\epsilon \rightarrow 0^-} \frac{m-x_1(\epsilon)}{-\epsilon} \leq 0,\label{slope_l_2}
\end{eqnarray}
since $\epsilon <0$ and $m \leq x_1(\epsilon)$. Thus $l$ has nonpositive slope.

Analogously, it can be shown that when $M<\pi_M$, $L$ has negative slope and when $M=\pi_M$, $L$ has nonnegative slope.

\begin{cor}\label{Cor_AuYeung_result}
The numerical range is convex if and only if $\pi_m=m$ and $\pi_M=M$.
\end{cor}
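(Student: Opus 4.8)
The plan is to rephrase the convexity of $W$ entirely in terms of the two tangent lines $l$ and $L$ of \eqref{tangent lines}, and then to recognize the condition ``$\pi_m=m$ and $\pi_M=M$'' as a statement about their slopes. First I would assemble the chain of equivalences already available: by Theorem~\ref{theo convex W and B}, $W$ is convex if and only if the bild $B=W\cap\C$ is convex; a set coincides with its star-center exactly when it is convex, so $B$ is convex if and only if $\cnt(B)=B$; since, by Lemma~\ref{center_bild_conj}, the center is closed under conjugation and $B^-=(B^+)^*$, this is in turn equivalent to $\cnt^+(B)=B^+$, where $\cnt^+(B)=\cnt(B)\cap\C^+$; and, by Theorem~\ref{center bild}, $\cnt^+(B)=\{(\omega_1,\omega_2)\in B^+:\ l(\omega_2)\le\omega_1\le L(\omega_2)\}$. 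Hence it only remains to show that every point of $B^+$ lies between the lines $l$ and $L$ if and only if $\pi_m=m$ and $\pi_M=M$. Note that this reduction already absorbs the degenerate cases $m=M$, because Theorem~\ref{center bild} was proved without that restriction.

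For the ``if'' direction, assume $\pi_m=m$ and $\pi_M=M$. The computations preceding the corollary---equation~\eqref{slope_l_2} for $l$ and its analogue for $L$---show that the slope $a=x_1'(0^-)$ of $l$ is nonpositive and the slope $b=x_2'(0^-)$ of $L$ is nonnegative. For any $(\omega_1,\omega_2)\in B^+$ one has $\omega_2\ge 0$, and since $\omega_1\in\pi_{\R}(W)$ also $m=\pi_m\le\omega_1\le\pi_M=M$; therefore $l(\omega_2)=a\omega_2+m\le m\le\omega_1\le M\le b\omega_2+M=L(\omega_2)$. So $B^+=\cnt^+(B)$, and hence $W$ is convex.

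For the ``only if'' direction I would argue by contraposition. As $\pi_m\le m$ and $M\le\pi_M$ always hold, the negation of ``$\pi_m=m$ and $\pi_M=M$'' is ``$\pi_m<m$ or $\pi_M>M$''; suppose $\pi_m<m$, the other case being symmetric (using $L$ and the analogue of \eqref{slope_l_1}). By compactness of $W$ there is $w\in W$ with $w_r=\pi_m$, and since $\pi_m<m=\min(W\cap\R)$ this $w$ is non-real, so $|w_v|>0$ and $\vc{\omega}=(\pi_m,|w_v|)\in B^+$ with $\omega_2=|w_v|>0$; by~\eqref{slope_l_1} the slope $a$ of $l$ is strictly positive, whence $l(\omega_2)=a\omega_2+m>m>\pi_m=\omega_1$, so $\vc{\omega}\notin\cnt^+(B)$ and $W$ is not convex. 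The step most worth getting right is the equivalence $\cnt(B)=B\iff\cnt^+(B)=B^+$, which rests on the conjugation symmetry of both $B$ and $\cnt(B)$, together with checking that the point realizing $\pi_m$ genuinely lies in $B^+$ with strictly positive imaginary part so that Theorem~\ref{center bild} applies to it; everything else is a one-line slope comparison.
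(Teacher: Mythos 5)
Your proposal is correct and follows essentially the same route as the paper's proof: reduce convexity of $W$ to $\cnt(B)=B$ via Theorem~\ref{theo convex W and B} and Theorem~\ref{center bild}, then settle both directions by the sign of the slopes $a$ and $b$ from \eqref{slope_l_1} and \eqref{slope_l_2}, exhibiting the point of $B^+$ with real part $\pi_m$ as the witness of non-convexity when $\pi_m<m$. Your explicit reduction to $\cnt^+(B)=B^+$ via Lemma~\ref{center_bild_conj} is a welcome clarification of a step the paper leaves implicit, but it does not change the argument.
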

\begin{proof}
We begin by proving that if $\pi_m\neq m$ or $\pi_M\neq M$, then the numerical range is non-convex. Suppose $\pi_m<m$ (the case $M<\pi_M$ is analogous). Let $x=l(y)=m+ay$ be the left tangent line to $x_1(\cdot)$ at $0$ as in (\ref{tangent lines}).

For $y>0$, we have, by (\ref{slope_l_1}), $x=l(y)=m+ay\geq m$. Notice that $(\pi_m, -y_{\pi_m})\in B^+$. Therefore, $l(-y_{\pi_m})=m+(-y_{\pi_m}) a\geq l(0)=m>\pi_m$. Hence, we have found $(\pi_m, -y_{\pi_m})\in B^+$ such that $l(-y_{\pi_m})>\pi_m$. From theorem \ref{center bild} we have $(\pi_m, -y_{\pi_m})\notin \cnt(B)$ and so $B$ is not convex since $\cnt(B)\neq B$. By theorem \ref{theo convex W and B} we conclude that $W$ is not convex.

Now we prove that if $\pi_m=m$ and $\pi_M=M$ then the numerical range is convex. Recall that $l(y)=ay+m$, with $a\leq 0$, see (\ref{slope_l_2}). For $y>0$, we have $l(y)\leq m$. For every $(x,y)\in B$, we have $x\geq m \geq l(y)$.

Analogously, we can show that $x\leq M\leq L(y)$. From theorem \ref{center bild} we have that $(x,y)\in \cnt(B)$. Since $(x,y)$ is arbitrary, we have that $\cnt(B)=B$ is convex and from theorem \ref{theo convex W and B}, $W$ is convex. \end{proof}

\color{black}

An interesting case, where the center is a kite, is when $\pi_m<m$ and $M<\pi_M$. The next corollary proves this result.

\begin{cor}\label{straightforward characterization}
Let $\pi_m<m\leq M<\pi_M$. Suppose there is $\vc{\tilde{\omega}}\in \C^+$ such that $l\cap L=\{\vc{\tilde{\omega}}\}$. Then,
\[
\cnt(W)\cap \C= \conv \big\{(m,0), (M,0), \vc{\tilde{\omega}}, \vc{\tilde{\omega}}^*\big\} \cap B.
\]
\end{cor}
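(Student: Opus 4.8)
The plan is to combine the characterization of $\cnt(B)$ from Theorem \ref{center bild} with the geometric description of the tangent lines $l$ and $L$. Recall that Theorem \ref{center bild} tells us $\cnt(B)\cap\C^+ = \{(\omega_1,\omega_2)\in B^+ : l(\omega_2)\le \omega_1\le L(\omega_2)\}$, and by Lemma \ref{center_bild_conj} together with \eqref{center+center-} the lower part is the conjugate of the upper part; also Theorem \ref{C(W)=[C(B)]}/Proposition \ref{bilds_center} give $\cnt(W)\cap\C = \cnt(W\cap\C) = \cnt(B)$. So it suffices to identify the set $\{(\omega_1,\omega_2)\in B : l(|\omega_2|)\le \omega_1\le L(|\omega_2|)\text{ on the appropriate side}\}$ with $\conv\{(m,0),(M,0),\vc{\tilde\omega},\vc{\tilde\omega}^*\}\cap B$.

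The key steps, in order: First I would record that under the hypothesis $\pi_m<m$, equation \eqref{slope_l_1} gives $a=x_1'(0^-)>0$, so $l$ has strictly positive slope; symmetrically, $M<\pi_M$ forces $L$ to have strictly negative slope. Hence in the upper half-plane $l(y)=m+ay$ is strictly increasing in $y$ and $L(y)=M+by$ is strictly decreasing, and since $m\le M$ the two lines, which satisfy $l(0)=m\le M=L(0)$, meet at a unique point $\vc{\tilde\omega}=(\tilde\omega_1,\tilde\omega_2)$ with $\tilde\omega_2\ge 0$; the hypothesis $l\cap L=\{\vc{\tilde\omega}\}$ with $\vc{\tilde\omega}\in\C^+$ just names it (and rules out $a=b$, i.e. parallel lines). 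Second, I would observe that the region $\{(x,y): y\ge 0,\ l(y)\le x\le L(y)\}$ is exactly the (filled) triangle with vertices $(m,0)$, $(M,0)$, $\vc{\tilde\omega}$: it is the intersection of the half-plane $y\ge 0$, the half-plane $x\ge m+ay$ (to the right of $l$), and the half-plane $x\le M+by$ (to the left of $L$), and since $l,L$ cross exactly at $\vc{\tilde\omega}$ above the real axis this intersection is precisely that triangle. Taking the union with its conjugate (reflection in the real axis) yields the kite $\conv\{(m,0),(M,0),\vc{\tilde\omega},\vc{\tilde\omega}^*\}$. Third, intersecting with $B$ and using Theorem \ref{center bild} (upper part) together with \eqref{center+center-} (lower part is the conjugate) gives
\[
\cnt(B) = \Big(\conv\{(m,0),(M,0),\vc{\tilde\omega},\vc{\tilde\omega}^*\}\Big)\cap B,
\]
and then $\cnt(W)\cap\C=\cnt(B)$ finishes the statement.

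A small technical point I would be careful about: Theorem \ref{center bild} is phrased for $\vc\omega\in B^+$, i.e. one must restrict to $\omega_2\ge 0$, so when I describe $\{(x,y): y\ge 0, l(y)\le x\le L(y)\}$ as a triangle I should note that this set already lies in $\C^+$ and that its intersection with $B$ equals $\cnt(B)\cap\C^+=\cnt^+$; the real edge $[(m,0),(M,0)]$ is contained in $B$ by Theorem \ref{theo star shape} (the reals of $W$ are in the center), which is consistent. The conjugate half is handled identically via \eqref{center+center-}. I do not expect a serious obstacle here — the corollary is essentially a repackaging of Theorem \ref{center bild} into convex-hull language — but the one place needing genuine (if short) argument is the claim that the half-plane intersection $\{y\ge 0\}\cap\{x\ge l(y)\}\cap\{x\le L(y)\}$ is bounded and equals the triangle $\conv\{(m,0),(M,0),\vc{\tilde\omega}\}$; this uses precisely that $a>0>b$ (so the strip between the lines closes up as $y$ increases) and that the unique crossing point $\vc{\tilde\omega}$ has $\tilde\omega_2\ge 0$, both of which I establish in the first step from \eqref{slope_l_1} and its analogue for $L$.
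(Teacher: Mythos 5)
Your proposal is correct and takes essentially the same route as the paper's proof: establish from (\ref{slope_l_1}) that $l$ has positive slope and $L$ negative slope, note that they cross at $\vc{\tilde{\omega}}\in\C^+$, and then invoke Theorem \ref{center bild} together with the conjugation symmetry (\ref{center+center-}). You merely make explicit the identification of $\{(x,y): y\geq 0,\ l(y)\leq x\leq L(y)\}$ with the triangle $\conv\{(m,0),(M,0),\vc{\tilde{\omega}}\}$ (and its reflection, giving the kite), a step the paper leaves implicit in ``the result follows from theorem \ref{center bild}.''
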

\begin{proof}
When $\pi_m<m$, as we have noticed in (\ref{slope_l_1}), $l$ has positive slope. Similarly, we can show that $L$ has negative slope. Since $l$ passes through $(m,0)$ and $L$ through $(M,0)$, $l$ and $L$ must cross at a point in $\C^+$. Let this point be $\vc{\tilde{\omega}}$. The result follows from theorem \ref{center bild}.
\end{proof}

The follow example illustrates a case where the center is a kite.

\begin{ex}
Following \cite[page 318]{ST}, let $A=\left[\begin{array}{cc} k_1 i & \alpha \\ -\alpha & 1+k_2 i\end{array}\right],$ with $\alpha, k_1,k_2\in\R^+$ and $\alpha^2>k_1k_2$. In this case, the boundary of the lower bild $B^-$ consists of an ellipse $\mathcal{E}$ and the segment $[m,M]\times\{0\}$, where $(m,0)$ and $(M,0)$ are the points where $\mathcal{E}$ intersects the real axis (the notation in \cite{ST} is $m=T_1$ and $M=T_2$). Our aim is to describe the center of the bild of $A$.

From \cite[lemma 6.4]{ST}, case $5$, the ellipse $\mathcal{E}$ contains the points $(0,-k_1)$, $(1,-k_2)$, $(m,0)$ and $(M,0)$, where
\begin{equation}\label{m and M}
m=\frac{k_1^2}{k_1^2(\alpha+(\alpha^2-k_1k_2)^{\frac{1}{2}})^2}\,\,\,\textrm{ and }\,\,\,M=\frac{k_1^2}{k_1^2(\alpha-(\alpha^2-k_1k_2)^{\frac{1}{2}})^2}.
\end{equation}
Moreover, we know that the vertical lines $x=0$ and $x=1$ are tangent to the ellipse at $(0,-k_1)$ and $(1,-k_2)$, respectively. These data fully characterize the ellipse $\mathcal{E}$. Therefore, if we substitute those points in the general equation
\[
Ax^2+Bxy+Cy^2+Dx+Ey+F=0,
\]
we obtain a homogeneous system of six linear equations with six unknowns. From formulas (\ref{m and M}) one concludes that the linear system's matrix has rank $5$. Solving the linear system leads to the following characterization of $\mathcal{E}$:
\begin{equation}\label{general ellipse eq}
x^2+2(k_2-k_1)\frac{mM}{k_1^2}xy+\frac{mM}{k_1^2}y^2-(M+m)x+\frac{2mM}{k_1}y+mM=0.
\end{equation}
Taking the derivative $\dfrac{d}{dy}$  in (\ref{general ellipse eq}) with $x=x(y)$ (recall that the left derivatives $x_1'(0^-)$ and $x_2'(0^-)$ exist), we get
\begin{equation*}
x_1'(0^-)=\frac{2mM(k_1+(k_2-k_1)m)}{k_1^2(M-m)}\,\,\,\textrm{ and }\,\,\,x_2'(0^-)=-\frac{2mM(k_1+(k_2-k_1)M)}{k_1^2(M-m)}.
\end{equation*}
It is now possible, albeit a tedious computation, to define the lines $l$ and $L$ as in theorem \ref{center bild} and characterize $\cnt(B)$.

Let us consider a more specific example. Take $A=\left[\begin{array}{cc} \frac{1}{8} i & \frac{1}{4} \\ -\frac{1}{4} & 1+\frac{1}{8} i\end{array}\right],$ i.e. $\alpha=\dfrac{1}{4}$ and $k_1=k_2=\dfrac{1}{8}$. Then, the ellipse $\mathcal{E}$ becomes
\[
x^2+4y^2-x+y+\frac{1}{16}=0,
\]
or, in the reduced form,
\[
\frac{(x-\frac{1}{2})^2}{(\frac{1}{2})^2}+\frac{(y+\frac{1}{8})^2}{(\frac{1}{4})^2}=1.
\]
We have:
\[
m=\frac{1}{2}-\frac{\sqrt{3}}{4}\,\,,\,\,M=\frac{1}{2}+\frac{\sqrt{3}}{4}\,\,,\,\,x_1'(0^-)=\frac{2\sqrt{3}}{3}
\,\,,\,\,x_2'(0^-)=-\frac{2\sqrt{3}}{3}.
\]
The lines $l$ and $L$ are given by
\[
l\,:\,\, x=\frac{2\sqrt{3}}{3}y+\frac{1}{2}-\frac{\sqrt{3}}{4}\,\,\,\textrm{ and }\,\,\,L\,:\,\, x=-\frac{2\sqrt{3}}{3}y+\frac{1}{2}+\frac{\sqrt{3}}{4}.
\]
They intersect at $\Big(\dfrac{1}{2}, \dfrac{3}{8}\Big)$, a point on the boundary of the bild of $A$.

We conclude that the center of the bild of $A=\left[\begin{array}{cc} \frac{1}{8} i & \frac{1}{4} \\ -\frac{1}{4} & 1+\frac{1}{8} i\end{array}\right]$ is:
\[
\cnt(B)=\Bigg\{(x,y)\in \R^2: \Big|x-\frac{1}{2}\Big|\leq \frac{\sqrt{3}}{4}-\frac{2\sqrt{3}}{3}|y| \Bigg\}.
\]
\end{ex}

\end{document}